\theoremstyle{plain}
\newtheorem{theorem}{Theorem}[section]
\newtheorem{lemma}[theorem]{Lemma}
\newtheorem{proposition}[theorem]{Proposition}
\newtheorem{corollary}[theorem]{Corollary}
\theoremstyle{definition}
\newtheorem{definition}[theorem]{Definition}
\theoremstyle{remark}
\newtheorem{remark}[theorem]{Remark}
\numberwithin{equation}{section}
\numberwithin{table}{section}
\numberwithin{figure}{section}
\renewcommand{\qed}{\hfill {$\Box$}}
\newcommand{\CC}{\mathord{\mathbb C}}
\newcommand{\EE}{\mathord{\mathbb E}}
\newcommand{\FF}{\mathord{\mathbb F}}
\newcommand{\PP}{\mathord{\mathbb  P}}
\newcommand{\ZZ}{\mathord{\mathbb Z}}
\newcommand{\GGG}{\mathord{\mathcal G}}
\newcommand{\HHH}{\mathord{\mathcal H}}
\newcommand{\III}{\mathord{\mathcal I}}
\newcommand{\LLL}{\mathord{\mathcal L}}
\newcommand{\OOO}{\mathord{\mathcal O}}
\newcommand{\PPP}{\mathord{\mathcal P}}
\newcommand{\SSS}{\mathord{\mathcal S}}
\newcommand{\TTT}{\mathord{\mathcal T}}
\newcommand{\UUU}{\mathord{\mathcal U}}
\newcommand{\XXX}{\mathord{\mathcal X}}
\newcommand{\ZZZ}{\mathord{\mathcal Z}}
\newcommand{\maprightsp}[1]{\; \smash{\mathop{\; \longrightarrow \; }\limits\sp{#1}}\; }
\newcommand{\mapleftsp}[1]{\; \smash{\mathop{\; \longleftarrow \; }\limits\sp{#1}}\; }
\newcommand{\mapdown}{\phantom{\Big\downarrow}\hskip -8pt \downarrow}
\newcommand{\mapdownleft}[1]{\llap{$\vcenter{\hbox{$\scriptstyle#1$\;}}$}\mapdown}
\newcommand{\inj}{\hookrightarrow}
\newcommand{\surj}{\mathbin{\to \hskip -7pt \to}}
\newcommand{\isom}{\xrightarrow{\raise -3pt \hbox{\scriptsize $\sim$} }}
\newcommand{\set}[2]{\{\,{#1}\mid {#2} \,\}}
\newcommand{\bigset}[2]{\left\{\; {#1} \; \left\vert \; {#2} \;  \right.\right \}}
\newcommand{\angs}[1]{\langle {#1}  \rangle}
\newcommand{\inv}{\sp{-1}}
\newcommand{\dual}{\sp{\vee}}
\newcommand{\sprime}{\sp{\prime}}
\newcommand{\sperp}{\sp{\perp}}
\DeclareMathOperator{\Aut}{Aut}
\DeclareMathOperator{\Image}{Image}
\DeclareMathOperator{\Sing}{Sing}
\DeclareMathOperator{\Ker}{Ker}
\newcommand{\PGL}{\mathord{\mathrm{PGL}}}
\newcommand{\OG}{\mathord{\mathrm{O}}}
\newcommand{\id}{\mathord{\mathrm{id}}}
\newcommand{\mystruth}[1]{\phantom{\hbox{\vrule height #1}}}
\newcommand{\pione}{\pi_1}
\newcommand{\intf}[1]{\langle #1 \rangle}
\newcommand{\spset}[1]{^{\{#1\}}}
\newcommand{\mtan}[1]{t_{#1}}
\newcommand{\ADE}{\mathrm{ADE}}
\newcommand{\Tac}{\mathrm{Tac\,}}
\newcommand{\Pic}{\mathrm{Pic}}
\newcommand{\mon}{\Phi}
\newcommand{\LLn}[1]{\LLL^{[#1]}}
\newcommand{\bfP}{\mathbf{P}}
\newcommand{\blambda}{\bm{\lambda}}
\newcommand{\bp}{\bm{p}}
\newcommand{\bP}{\bm{P}}
\newcommand{\brho}{\bm{\rho}}
\newcommand{\sfP}{\mathsf{P}}
\newcommand{\sfB}{\mathsf{B}}
\newcommand{\sfQ}{\mathsf{Q}}
\newcommand{\birat}{\dashrightarrow}
\newcommand{\tilQ}{\widetilde{Q}}
\newcommand{\tilB}{\widetilde{B}}
\newcommand{\barW}{\overline{W}}
\newcommand{\barDelta}{\overline{\Delta}}
\newcommand{\barL}{\overline{L}}
\newcommand{\barH}{\overline{H}}
\newcommand{\involB}{i_B}
\begin{document}

\title[Del Pezzo surfaces and Zariski multiples]
{Del Pezzo surfaces of degree one and examples of Zariski multiples}

\author{Ichiro Shimada}
\address{Department of Mathematics,
Graduate School of Science,
Hiroshima University,
1-3-1 Kagamiyama,
Higashi-Hiroshima,
739-8526 JAPAN}
\email{ichiro-shimada@hiroshima-u.ac.jp}
\thanks{Supported by JSPS KAKENHI Grant Number~20H00112, 23K20209, and 23H00081}


\begin{abstract}
We construct examples of Zariski 
$N$-tuples with large 
$N$ using the monodromy action 
of the Weyl group of type $E_8$ on the set of $240$ lines in a del Pezzo surface of degree one.
\end{abstract}
\keywords{Del Pezzo surface, monodromy, lattice, Weyl group, Zariski multiple}
\maketitle
\section{Introduction}
We work over the field of complex numbers.
A \emph{plane curve} means a reduced, possibly reducible, projective plane curve.
\par
Zariski~\cite{ZariskiA, ZariskiB} demonstrated that an equisingular family of plane curves may fail to be connected by 
providing an example of a pair of $6$-cuspidal plane sextics
such that  their complements 
have non-isomorphic fundamental groups.
Artal~Bartolo~\cite{Artal1994}  revisited Zariski's work, 
and defined  
a \emph{Zariski pair} 
as a pair of plane curves
 that have the same combinatorial type of singularities 
 but differ in their embedding topologies. 
 More generally, a collection of 
$N$ plane curves is called a \emph{Zariski  $N$-tuple}
 if any two  in the collection form a Zariski pair.
 \par
Since~\cite{Artal1994}, many Zariski multiples have been constructed 
using a wide variety of methods.
The construction of Zariski multiples 
has served as 
a good testing ground for various techniques in the study of embedding topology of plane curves.
See, for example,  the survey paper~\cite{TheSurvey}.
\par
In this paper,
we employ the lattice structure of the middle homology group of the double plane 
branching along the plane curve as an invariant to distinguish topological types~\cite{Shimada2010}.
We enumerate the topological types in our Zariski multiples  by a monodromy argument initiated in~\cite{Shimada2003}.
Del Pezzo surfaces of degree~$1$ and the Weyl group of type~$E_8$ play 
an important role in our investigation.
\par
As the main result, we obtain the following:
\begin{theorem}\label{thm:main1}
For each  integer $k$ satisfying $1<k<119$,
there exists a Zariski $N(k)$-tuple $\ZZZ_k$
consisting of plane curves of degree $7+2k$,
where 
\begin{equation}\label{eq:Nbinom}
N(k)\ge \frac{1}{348364800}\binom{120}{k}.
\end{equation}
\end{theorem}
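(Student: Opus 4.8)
The plan is to realize $\ZZZ_k$ as a complete set of representatives for the connected components of a single equisingular family of plane curves associated with del Pezzo surfaces of degree one, and then to separate these components by the lattice invariant of the associated double plane.

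\emph{The construction.} For a general del Pezzo surface $S$ of degree one I would use its $240$ lines, which split into $120$ pairs $\{\ell,\ell^{*}\}$ with $\ell+\ell^{*}\in\lvert -2K_S\rvert$; write $\mathcal T_S$ for this set of $120$ pairs. For each $k$-element subset $\delta\subset\mathcal T_S$ one produces (this is carried out in the sections below) a plane curve $C_S^{\delta}$ of degree $7+2k$ depending only on $\delta$ and on the intrinsic geometry of $S$. The first thing to prove is an \emph{equisingularity} statement: for $S$ general the combinatorial type of the singularities of $C_S^{\delta}$ is independent of $\delta$ and of $S$. Since the only $\delta$-dependent input to the singular points of $C_S^{\delta}$ is the mutual position of the chosen pairs, this reduces to a genericity statement for any $k$ of the $120$ pairs on a general $S$, and should follow from the transitivity of the monodromy below together with a dimension count. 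Denote by $\mathfrak c(k)$ the resulting common combinatorial type.

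\emph{The monodromy count.} The moduli space of del Pezzo surfaces of degree one is irreducible, and classically its monodromy on $\mathrm{Pic}(S)$ fixing $K_S$ is the full Weyl group $W(E_8)$. Since $-\mathrm{id}\in W(E_8)$ interchanges the two lines of each pair, the induced monodromy on $\mathcal T_S$ is $\overline W:=W(E_8)/\{\pm\mathrm{id}\}$, of order $696729600/2=348364800$, acting transitively on the $120$ pairs. View the curves $C_S^{\delta}$ as the members of a family over the base $\{(S,\delta)\}$, which is finite étale of degree $\binom{120}{k}$ over the moduli of general $S$; its connected components then correspond to the $\overline W$-orbits of $k$-element subsets of $\mathcal T_S$, so their number $r(k)$ satisfies $r(k)\ge\binom{120}{k}/348364800$ because each orbit has at most $\lvert\overline W\rvert$ elements. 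As every curve in this family has singularities of type $\mathfrak c(k)$, two curves in the same component are joined by an equisingular deformation and hence have homeomorphic embeddings in $\mathbb{P}^2$. Fixing a general $S_0$ and one $\delta$ per component, we obtain $r(k)$ plane curves of degree $7+2k$, all of combinatorial type $\mathfrak c(k)$.

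\emph{Separating the components — the main obstacle.} It remains to prove that these $r(k)$ curves have pairwise non-homeomorphic embeddings, so that they form a Zariski $r(k)$-tuple and one may take $N(k)=r(k)$. For this I would use the lattice $\Lambda_{C}$ of the double plane branched along $C$ (its degree made even by an auxiliary line, as in \cite{Shimada2010}), together with its canonical sublattice generated by the exceptional $(-2)$-classes of the resolution and by the classes of the components of the branch curve; by \cite{Shimada2010} the isometry class of this data is a topological invariant of the embedded curve. The crux is to compute $\Lambda_{C_S^{\delta}}$ explicitly and to show that it canonically contains a copy of the root lattice $E_8$ — the orthogonal complement $K_S^{\perp}\subset\mathrm{Pic}(S)$, realized inside the cohomology of the resolved double plane — together with the $k$ distinguished roots indexed by $\delta$, in such a way that the $\overline W$-orbit of $\delta$ is recovered from $\Lambda_{C_S^{\delta}}$ equipped with this structure; a homeomorphism $(\mathbb{P}^2,C_{S_0}^{\delta})\simeq(\mathbb{P}^2,C_{S_0}^{\delta'})$ then forces $\delta'\in\overline W\cdot\delta$, and the $r(k)$ curves are pairwise inequivalent. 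The hypothesis $1<k<119$ is exactly what guarantees that $\mathcal T_S$ carries at least two $\overline W$-orbits of $k$-subsets, so that $\ZZZ_k$ is genuinely a multiple. The surjectivity of the monodromy onto $W(E_8)$ and the equisingularity statement $\mathfrak c(k)$ are expected to be routine; the main obstacle is the computation of $\Lambda_{C_S^{\delta}}$ — locating the canonical $E_8$ and the $k$ marked roots in the cohomology of the resolved double cover, checking that they are topologically intrinsic, and supplying the lattice-theoretic rigidity that promotes an isometry of the marked lattices to an element of $W(E_8)$.
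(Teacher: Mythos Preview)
Your overall strategy—construct curves from a del Pezzo surface of degree one plus a choice of $k$ line-pairs, count connected components via the $W(E_8)$-monodromy, and separate them by a lattice invariant of a double cover—is exactly the paper's. The difference lies in how the separation step is carried out, and it is worth noting because it dissolves what you correctly identify as ``the main obstacle''.

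You propose to branch along the full curve $C_S^{\delta}$ (plus an auxiliary line) and then hunt for a canonical copy of $E_8$ with $k$ marked root-pairs inside the resulting lattice. The paper does something more direct. Since a homeomorphism of $(\PP^2, D_{o,s})$ must send each irreducible component to one of the same degree, it fixes the sextic $C_o$ and the line $\Lambda$ and permutes the $k$ conics. Hence it restricts to a self-homeomorphism of the \emph{fixed} complement $M_o=\PP^2\setminus(C_o\cup\Lambda)$, and lifts to its unique connected double cover $Z_o$. But under the birational map from the quadric cone to $\PP^2$, this $Z_o$ is literally a Zariski-open piece of the del Pezzo surface $X_b$ itself; the quotient lattice $H_2(Z_o)/\mathrm{rad}$ is then canonically $K_{X_b}^\perp\cong E_8$, with no search required. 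Each conic $\Gamma_i\subset M_o$ lifts in $Z_o$ to the two lines of the corresponding Bertini pair, so the marked roots are simply the classes of these lifts, and the rigidity statement reduces to $\OG(E_8)=W(E_8)$. One small ingredient you omit and the paper singles out (attributing it to Artal~Bartolo) is that every self-homeomorphism of $\PP^2$ preserves orientation; this is what guarantees the induced map on $H_2(Z_o)$ is an isometry rather than an anti-isometry.

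So your plan would work along the lines of \cite{Shimada2010}, but the paper's choice of cover—branching only along the degree-$7$ part that is common to all $D_{o,s}$—makes both the $E_8$ and the markings appear for free and turns the lattice computation you flag as the crux into a one-line identification.
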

See Section~\ref{sec:main} for the precise description 
of the combinatorial type 
of the plane curves in  $\ZZZ_k$, and 
the exact values of $N(k)$.
We have $N(120-k)=N(k)$, and 
the values of $N(k)$ for small $k$ are as follows:
%
%
%
%
\begin{equation}\label{eq:Nks}
\begin{array}{c|ccccccccc}
k & 1 & 2 & 3 & 4 & 5 & 6 &7 &8 &9 \\
\hline 
N(k) & 1& 2 & 5 & 15 & 48 & 212 &1116& 7388& 56946\rlap{\;.}
\end{array}
\end{equation}
Putting $k=60$,
we obtain an example of a  Zariski $N(60)$-tuple with 
\begin{equation*}\label{eq:largeN}
N(60)>2.77\times 10^{26}.
\end{equation*}
\par
In our previous paper~\cite{Shimada2022},
we constructed  Zariski multiples by
using the monodromy of the Weyl group of type $E_7$
for a family of del Pezzo surfaces of degree $2$.
For example, we obtained a Zariski $105$-tuple 
in which each member is 
a union of a smooth quartic curve and $14$ lines chosen from its $28$ bitangents.
These examples extended the works~\cite{Bannai2016, Bannai2019, Bannai2020}
by Bannai et al.
We also constructed a series of  Zariski $N$-tuple with $N\to \infty$  by 
means of
 $4$-tangent conics of a smooth quartic curve.
\par
It is a natural step to extend this argument to 
the  Weyl group of type $E_8$ and 
del Pezzo surfaces of degree $1$.
A smooth quartic curve  in~\cite{Shimada2022} is now replaced by a $\mtan{3}$-sextic~(Definition~\ref{def:t3sextic}),
and its $28$ bitangents are replaced by $120$ special tangent conics~(Definition~\ref{def:specialtangent}).
The choices of $k$ conics from the $120$ special tangent conics yield
the Zariski $N(k)$-tuple $\ZZZ_k$.
It seems to be an interesting problem 
 to ask what becomes of the $4$-tangent conics of a smooth quartic curve in the context of a $\mtan{3}$-sextic.
 See~Remark~\ref{rem:K3}.
\par
This paper contains two improvements compared with~\cite{Shimada2022}.
One is that we present a  unified method to compute the monodromy group 
for families of del Pezzo surfaces.
In fact, the proof of~\cite[Theorem~3.1]{Shimada2022} on the monodromy of a family of  del Pezzo surfaces of degree  $2$ 
was incomplete, and we give a rectified argument in the present paper.
Another improvement is that 
we use a simpler reasoning to distinguish topological types in our Zariski multiples.
This simplification is based on an observation (Lemma~\ref{lem:AB})  by Artal~Bartolo.
\par
In several parts of the proofs, we rely on brute-force computations carried out by a computer. 
For this purpose, we employ {\tt GAP}~\cite{GAP}.
A detailed computational data is available from~\cite{WE8compdata}.
\par
The plan of this paper is as follows.
In Section~\ref{sec:main}, we precisely state our main result in Theorem~\ref{thm:main2}.
We define the combinatorial type 
of our  Zariski $N(k)$-tuples, 
and give the exact value of $N(k)$.
In Section~\ref{sec:monodromy}, 
we develop a general theory to compute the monodromy group
for families of del Pezzo surfaces. 
In Section~\ref{sec:lines}, 
we study the lines in a del Pezzo surface of degree $1$
more closely.
In Section~\ref{sec:mtan3anddelPezzo}, 
we relate the theory of del Pezzo surfaces to that of singular plane curves,
and deduce some properties of $\mtan{3}$-sextics
from our discussion about  del Pezzo surfaces of degree $1$.
In Section~\ref{sec:embtop}, 
we investigate the embedding topology of $\mtan{3}$-sextics and 
prove Theorem~\ref{thm:main2}.
We conclude this paper by a remark on the work~\cite{Roulleau2022} about $K3$ surfaces obtained as 
double covers of  del Pezzo surfaces of degree $1$.
%
%
\subsection*{Notation}
\begin{enumerate}[label={(\arabic*)}]
\item For a set $M$ and a non-negative integer $k$,
we denote  by 
$M\spset{k}:=\displaystyle{\binom{M}{k}}$
 the set of subsets $S\subset M$ of size $|S|=k$.
\item The orthogonal group $\OG(L)$ of a lattice $L$ acts on $L$ from the right.
\item For a topological space $T$, 
we write $H_i(T)$ and $H^i(T)$ for $H_i(T;\ZZ)$ and $H^i(T;\ZZ)$, respectively.
\end{enumerate}
%
\subsection*{Acknowledgements}
The author expresses his sincere gratitude to 
Professor Enrique Artal~Bartolo for providing Lemma~\ref{lem:AB} and for many valuable discussions.
He also thanks 
Professor Shinzo Bannai, Professor Akira Ohbuchi, and Professor Masahiko Yoshinaga
for many comments and discussions.
\section{Main result}\label{sec:main}
In Section~\ref{subsec:sigmak},
we define the combinatorial type $\sigma_k$ 
of plane curves in our Zariski $N(k)$-tuple $\ZZZ_k$.
In  Section~\ref{subsec:Nk}, 
we define the number $N(k)$,  and state 
our main result Theorem~\ref{thm:main2}.
\subsection{Combinatorial type 
\texorpdfstring{$\sigma_k$}{sigmak}}\label{subsec:sigmak}
\begin{definition}\label{def:combtype}
Two plane curves $D$ and $D\sprime$ are said to 
have the \emph{same combinatorial type}
if there exist  tubular neighborhoods $T\subset \PP^2$ of $D$ and 
$T\sprime\subset \PP^2$ of $D\sprime$
such that 
$(\PP^2, T)$ and $(\PP^2, T\sprime)$ are homeomorphic.
\end{definition}
See~\cite[Remark 3]{TheSurvey} for another formulation of the notion of combinatorial type.
%
\begin{definition}\label{def:mtan3}
A germ  $(C, \mathbf{0})$ of isolated plane curve singularity
is said to be a \emph{$\mtan{m}$-singularity} if 
$C$ consists of $m$ smooth local branches
and each pair of the local  branches has intersection number $2$.
\end{definition}
Note that a $\mtan{2}$-singularity is an ordinary tacnode (an $a_3$-singularity).
\begin{definition}\label{def:ellA}
Let $C\subset \PP^2$ be a plane curve with a $\mtan{m}$-singularity at $A\in C$.
The common tangent line $\Lambda\subset \PP^2$ to the local branches of $C$ at $A$
is called the \emph{tangent line to $C$ at $A$}.
\end{definition}
\begin{definition}\label{def:t3sextic}
A plane curve $C$  of degree $6$ is 
called a  \emph{$\mtan{3}$-sextic}  
if the singular locus of $C$ consists of a single point $A$, 
and  $A\in C$ is a $\mtan{3}$-singular point.
\end{definition}
Note that a $\mtan{3}$-sextic
 is irreducible.
We fix a point $A\in \PP^2$ and a line $\Lambda\subset \PP^2$
passing through $A$.
\begin{definition}
A $\mtan{3}$-sextic 
with the singular point $A$ and the tangent line $\Lambda$ at $A$
is called a $\mtan{3}$-sextic  \emph{in the frame $(A, \Lambda)$}.
\end{definition}
Let $C$ be a $\mtan{3}$-sextic in the frame $(A, \Lambda)$.
\begin{definition}\label{def:specialtangent}
A smooth conic $\Gamma$ is said to be a \emph{special tangent conic} of $C$
if the following  hold; 
\begin{itemize}
\item 
the conic $\Gamma$ passes through  $A$, and 
$C+\Gamma$ has  $\mtan{4}$-singularity at $A$, and 
 \item
at every intersection point of $C$ and $\Gamma$ other than $A$,
the intersection multiplicity is even.
 \end{itemize}
 \end{definition}
 For a special tangent conic $\Gamma$, 
 we put 
 \[
 \Tac(\Gamma):=\Sing(C+\Gamma)\setminus\{A\}.
 \] 
\begin{definition}\label{def:generalset}
We say that 
a set $\{\Gamma_1, \dots, \Gamma_{k}\}$  
consisting of $k$  special tangent conics of $C$  
is said to be \emph{``in a general position"} 
if the following conditions hold.
\begin{itemize}
\item 
Any two of $\Gamma_1, \dots, \Gamma_{k}$ have local intersection number $2$ at $A$.
\item 
Each $ \Tac(\Gamma_i)$ consists of $3$ tacnodes of $C+\Gamma_i$.
\item 
The sets $\Tac(\Gamma_1), \dots, \Tac(\Gamma_{k})$ 
are disjoint to each other.
 \item
 The singular points  of 
 the union $C+\Gamma_1+\dots +\Gamma_{k}$ 
 other than $A$ and the tacnodes in $\Tac(\Gamma_1), \dots, \Tac(\Gamma_{k})$ 
 are ordinary nodes.
 \end{itemize}
\end{definition}
In Section~\ref{subsec:planecurveswithmtan}, we  prove the following:
\begin{proposition}\label{prop:t3}
All  $\mtan{3}$-sextics 
 in the frame $(A, \Lambda)$
are parameterized by a Zariski open subset  $\TTT$
of a $15$-dimensional linear subspace of $|\OOO_{\PP^2}(6)|$.
\end{proposition}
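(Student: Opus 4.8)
The plan is to pass to coordinates adapted to the frame $(A,\Lambda)$, to translate the $\mtan{3}$ condition at $A$ into explicit linear equations on the coefficients of a ternary sextic, and then to check that the remaining requirements of Definition~\ref{def:t3sextic} carve out a non-empty Zariski open subset of the linear system so obtained.

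Fix affine coordinates $(x,y)$ on $\PP^2$ with $A$ at the origin and $\Lambda=\{y=0\}$, and write a sextic as $f=\sum_{i+j\le 6}a_{ij}x^iy^j$, so that the $a_{ij}$ form homogeneous coordinates on $|\OOO_{\PP^2}(6)|\cong\PP^{27}$. Suppose $C=\{f=0\}$ has a $\mtan{3}$-singularity at $A$ with tangent line $\Lambda$. Then $A$ is a triple point with tangent cone proportional to $y^3$, forcing $a_{ij}=0$ for $i+j\le 2$ together with $a_{30}=a_{21}=a_{12}=0$ (and $a_{03}\neq 0$); and near $A$ Weierstrass preparation gives $f=u\cdot\bigl(y^3+\alpha(x)y^2+\beta(x)y+\gamma(x)\bigr)$ with $u$ a unit. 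Since the three local branches are smooth and tangent to $\Lambda$, each is a graph $y=\psi_i(x)$ with $\psi_i$ a power series with no constant or linear term, so $\alpha=-\sum\psi_i$, $\beta=\sum_{i<j}\psi_i\psi_j$, $\gamma=-\prod\psi_i$ satisfy $\operatorname{ord}_x\alpha\ge 2$, $\operatorname{ord}_x\beta\ge 4$, $\operatorname{ord}_x\gamma\ge 6$. From $f(x,0)=u(x,0)\gamma(x)$ and $\partial_y f(x,0)=u_y(x,0)\gamma(x)+u(x,0)\beta(x)$, and the vanishings already imposed, these order conditions are equivalent to $a_{40}=a_{50}=0$ (from $\gamma$) and $a_{31}=0$ (from $\beta$), the condition $\operatorname{ord}_x\alpha\ge 2$ being automatic once $a_{02}=a_{12}=0$. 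These $12$ manifestly independent linear equations cut out a $15$-dimensional linear subspace $\TTT_0\subset|\OOO_{\PP^2}(6)|$, and the argument above shows that $\TTT_0$ contains every $\mtan{3}$-sextic in the frame $(A,\Lambda)$. Conversely, writing $f=a_{03}y^3+a_{22}x^2y^2+a_{41}x^4y+a_{60}x^6+(\text{weighted degree}\ge 7)$ for the weights $(\deg x,\deg y)=(1,2)$, a Newton--Puiseux argument shows that $f\in\TTT_0$ defines a curve with a $\mtan{3}$-singularity at $A$ and tangent line $\Lambda$ precisely when $a_{03}\neq 0$ and the cubic $P(t)=a_{03}t^3+a_{22}t^2+a_{41}t+a_{60}$ is separable.

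It remains to see that the $\mtan{3}$-sextics in the frame form a non-empty Zariski open subset $\TTT$ of $\TTT_0$. The conditions $a_{03}\neq 0$, $\operatorname{disc}_t P\neq 0$, and squarefreeness of $f$ (whose failure is the vanishing of a discriminant) are all open; on the open locus $W\subset\TTT_0$ where they hold, $C$ has a $\mtan{3}$-singularity at $A$, hence $\delta_A(C)=6$, so $C$ is reduced with $\Sing(C)=\{A\}$ if and only if $\delta_{\mathrm{tot}}(C)=6$. Since $\delta_{\mathrm{tot}}\ge 6$ throughout $W$ and the total $\delta$-invariant is upper semicontinuous in a family of plane curves (equivalently, the geometric genus is lower semicontinuous), the locus $\TTT=\{\,f\in W:\delta_{\mathrm{tot}}(C_f)\le 6\,\}$ is Zariski open. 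For non-emptiness one exhibits a member explicitly: the sextic $F=X^6+Y^6+Y^3Z^3$ lies in $\TTT_0$, has weighted tangent cone $Y^3+X^6$ with $P(t)=t^3+1$ separable, and a direct computation (at the origin, at the remaining zeros of $f=f_x=f_y$ in the affine plane, and along the line $Z=0$) shows that $A$ is its only singular point; hence $F\in\TTT$. Thus $\TTT$ is a non-empty Zariski open subset of the $15$-dimensional linear space $\TTT_0$, which proves the proposition (and, $\TTT$ being dense in $\TTT_0$, identifies $\TTT_0$ with the linear span of the $\mtan{3}$-sextics in the frame).

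The step I expect to demand the most care is the local bookkeeping in the second paragraph: checking that the order conditions on the Weierstrass coefficients $\alpha,\beta,\gamma$ amount to exactly the three extra equations $a_{40}=a_{50}=a_{31}=0$, together with the converse Newton--Puiseux analysis. A second, milder point is the openness of the condition $\Sing(C)=\{A\}$, for which the semicontinuity of the total $\delta$-invariant is a cleaner tool than a direct closedness argument, since a priori a singular point of a nearby curve might limit onto $A$.
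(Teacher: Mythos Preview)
Your proof is correct and lands on exactly the same twelve linear conditions $a_{\mu\nu}=0$ for $\mu+2\nu<6$ together with the open condition that $a_{03}\ne 0$ and $\operatorname{disc}P\ne 0$ that the paper isolates in Proposition~\ref{prop:tm}. The paper's route is a single blow-up $(x,y)=(u,uv)$: the $\mtan{m}$ condition at $A$ with tangent $\Lambda$ becomes the requirement that the strict transform have an ordinary $m$-fold point at the origin with every branch transverse to the exceptional curve, and reading off the lowest-order terms of $\sum a_{\mu\nu}u^{\mu+\nu-m}v^{\nu}$ gives conditions (i) and (ii) at once. Your Weierstrass/Newton--Puiseux bookkeeping is an equivalent unpacking of the same local analysis, just carried out in the base rather than on the blow-up. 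You also spell out two points the paper leaves implicit---the openness of ``$\Sing(C)=\{A\}$'' via upper semicontinuity of the total $\delta$-invariant, and an explicit member $X^6+Y^6+Y^3Z^3$ witnessing non-emptiness---whereas the paper simply records that Proposition~\ref{prop:t3} follows from Proposition~\ref{prop:tm} immediately.
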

For a point $t\in \TTT$,
we denote by $C_t$ the corresponding $\mtan{3}$-sextic.
In Section~\ref{subsec:pip}, we  prove the following:
\begin{proposition}\label{prop:t3conics}
Let $t$ be a general point of $\TTT$.
Then $C_t$ 
 has exactly $120$  special tangent conics, and they 
are in a general position.
\end{proposition}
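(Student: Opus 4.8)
The plan is to translate the problem about special tangent conics of a $\mtan{3}$-sextic into a statement about lines in the associated del Pezzo surface of degree $1$, and then invoke the monodromy/enumeration results established earlier in the paper. First I would recall from Section~\ref{sec:mtan3anddelPezzo} (Proposition~\ref{prop:t3} and the surrounding discussion) the construction attaching to a general $t\in\TTT$ a del Pezzo surface $Y_t$ of degree $1$: concretely one takes the double cover of $\PP^2$ branched along $C_t+\Lambda$ (or a suitable weighted projective model), resolves the singularity over $A$, and checks that the resulting surface is a del Pezzo surface of degree $1$ for general $t$. The special tangent conics of $C_t$ should then correspond bijectively to the $240$ lines of $Y_t$, or rather to the $120$ pairs of lines interchanged by the Bertini involution: the condition that $\Gamma$ meet $C_t$ with even multiplicity away from $A$ and produce a $\mtan{4}$-point at $A$ is exactly the condition that the pullback of $\Gamma$ split into two $(-1)$-curves conjugate under the deck transformation. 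This gives the count $120=240/2$.

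Next I would address the ``general position'' clause. Each of the four bullet conditions in Definition~\ref{def:generalset} is a Zariski-open condition on $t\in\TTT$, so it suffices to exhibit a single $t_0$ (or argue by a dimension/genericity count) at which all four hold simultaneously; equivalently, one shows the complement of each condition is a proper closed subset of $\TTT$. The cleanest route is to phrase each condition in terms of the geometry of lines on $Y_{t_0}$: that two lines among a chosen family meet the exceptional curve over $A$ transversally and pairwise ``correctly'' (giving local intersection number $2$ at $A$ downstairs), that the three residual intersection points of $\Gamma_i$ with $C_{t_0}$ are distinct simple tacnodes (corresponding to the three points where a line of $Y_{t_0}$ meets its Bertini conjugate, which for a general del Pezzo surface of degree $1$ are distinct and off all other lines), and that no further coincidences occur. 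For a sufficiently general del Pezzo surface of degree $1$ these are standard facts about the configuration of its $240$ lines; alternatively one can verify them by an explicit computation over a chosen base field, of the kind the paper already relies on via {\tt GAP}~\cite{GAP} and~\cite{WE8compdata}.

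The main obstacle, I expect, is making the correspondence ``special tangent conics $\leftrightarrow$ lines on $Y_t$'' precise and bijective, including the bookkeeping of intersection multiplicities: one must match the $\mtan{4}$-singularity condition at $A$ with the behaviour of a $(-1)$-curve along the exceptional divisor, and confirm that no conic is counted twice and that every line arises this way. This is where a careful local analysis of the double cover near $A$ (in suitable local coordinates adapted to the frame $(A,\Lambda)$) is needed, and it is the technical heart of Section~\ref{sec:mtan3anddelPezzo}. Once this dictionary is in place, the count $120$ is immediate from the classical fact that a del Pezzo surface of degree $1$ carries exactly $240$ lines, and the general-position statement reduces, as above, to genericity of $t$ together with known properties of the line configuration on a general such surface.
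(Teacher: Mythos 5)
Your overall strategy coincides with the paper's: the count $120$ comes from identifying the special tangent conics with the pairs of lines on a del Pezzo surface of degree one interchanged by the Bertini involution, and general position is reduced to properties of the configuration of the $240$ lines on a general such surface. The execution differs in one essential respect. The paper does not construct a double cover of $\PP^2$ branched along the odd-degree curve $C_t+\Lambda$ and analyze it locally at $A$; it starts from the bi-anti-canonical model $X_u\to Q$, a double cover of the quadric cone branched along $B_u\cup\{V\}$, where the tangent plane sections of $Q$ are already identified with the $120$ Bertini pairs (Section~\ref{subsec:Bertini}), and then transports everything to the plane by the projection $\pi_p\colon Q\birat\PP^2$ from a smooth point $p\in Q$ (Section~\ref{subsec:pip}). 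Under $\pi_p$ a plane section avoiding $V$ and $p$ becomes precisely a conic through $A$ tangent to $\Lambda$ at $A$, so the bijection $S(B_u)\cong G(C_{t(u)})$ and the $\mtan{4}$-condition at $A$ follow from the birational bookkeeping; the ``careful local analysis near $A$'' that you defer, and the awkwardness of the odd-degree branch divisor, are exactly what this route is designed to avoid. Your direct double-plane construction can be made to work, but it is the harder way to set up the same dictionary.

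The second point is a genuine soft spot: the general-position clause is not a ``standard fact'' about the $240$ lines --- the paper states it could not find a reference and proves it as Proposition~\ref{prop:union}, with Corollary~\ref{cor:general1} as the form that is carried through $\pi_p$. Moreover, the openness-plus-one-example argument you sketch is only practical because of the monodromy result: Proposition~\ref{prop:WE8} and Corollary~\ref{cor:irred} give irreducibility of the families of pairs and triples of lines with prescribed intersection numbers, so it suffices to exhibit one transverse pair for each of $m=2,3$ and one triple for each of the five relevant orbit types, which the paper does on a single explicit surface over $\FF_{19}$. Without that reduction you would have to verify the conditions for all pairs and triples on one explicit surface (feasible but much larger), and you would still need to make explicit why a verification in finite characteristic, via openness of the conditions over the parameter space, yields the statement for a general complex $t$.
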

For $t\in \TTT$,
we denote by $G(C_t)$ the set of special tangent conics of $C_t$.
For $s=\{\Gamma_1, \dots, \Gamma_k\}\in G(C_t)\spset{k}$,
we put
\[
D_{t, s}:=C_t+\Lambda+\Gamma_1+\cdots+\Gamma_k.
\]
\begin{definition}\label{def:sigmak}
By Proposition~\ref{prop:t3conics},
the combinatorial type
of $D_{t,s}$ does not depend on the choice of 
$t\in \TTT$ and $s \in G(C_t)\spset{k}$,
provided that $t$ is general in $\TTT$.
We denote this combinatorial type 
by $\sigma_k$.
\end{definition}
%
The curve of combinatorial type $\sigma_k$  
is of degree $7+2k$,  and its singularities consist
of one $\mtan{4+k}$-singular point, $3k$ tacnodes, and $k(k-1)$ ordinary nodes.
It should be noted that the information given by 
a combinatorial type 
 includes,  not only the types of singular points, 
but also more detailed data such as which irreducible components 
correspond to which local branch of each singular point.
See~\cite[Remark 3]{TheSurvey}.
%
\subsection{Main Theorem}\label{subsec:Nk}
Let $\EE_8$ be the root lattice of type $E_8$,
that is, $\EE_8$ is an even unimodular \emph{negative-definite} lattice of rank $8$
generated by vectors of square norm $-2$.
(Note that we adopt the sign convention opposite to the standard one.)
We denote by $\Delta(\EE_8)$ be the set of vectors of square norm $-2$ in $\EE_8$,
which is of size $240$.
Let $W(\EE_8)$ be the Weyl group of $\EE_8$,
that is, the subgroup of $\OG(\EE_8)$ generated by reflections with respect to vectors
of square norm $-2$. 
In fact, we have an equality $W(\EE_8)=\OG(\EE_8)$.
See~\eqref{eq:rtimes}.
We then  put
\[
\barW:=W(\EE_8)/\{\pm \id\}, 
\qquad 
\barDelta:=\Delta(\EE_8)/\{\pm \id\}.
\]
Then $\barW$ acts on $\barDelta$ and hence on the set 
$\barDelta\spset{k}$ 
of $k$-element subsets of $\barDelta$.
We define 
\[
N(k):=\textrm{the number of $\barW$-orbits in  $\barDelta^{\{k\}}$.}
\]
\par
Finally, we define the topological types of plane curves as follows.
\begin{definition}\label{def:homeo}
Two plane curves $D$ and $D\sprime$ are said to 
have \emph{the same embedding topology}
if there exists a homeomorphism between 
$(\PP^2, D)$ and $(\PP^2, D\sprime)$.
\end{definition}
Then our main result is stated as follows:
\begin{theorem}\label{thm:main2}
Let $o$ be a general point of $\TTT$.
We consider the plane curves $D_{o, s}$ of combinatorial type $\sigma_k$,
where $s$ runs through $G(C_o)\spset{k}$.
Then, 
classifying these curves by the embedding topology yields  exactly $N(k)$ classes.
\end{theorem}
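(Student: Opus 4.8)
The plan is to realize the classification of the curves $D_{o,s}$ by embedding topology as the orbit decomposition of $G(C_o)\spset{k}$ under the monodromy group of the family, and then to identify that monodromy group with $\barW$ acting on $\barDelta\spset{k}$ via the standard correspondence between special tangent conics and (classes of) roots of $\EE_8$. First I would set up the monodromy: by Proposition~\ref{prop:t3} the family of $\mtan{3}$-sextics in the frame $(A,\Lambda)$ is parameterized by the irreducible variety $\TTT$, and over the locus $\TTT^\circ\subset\TTT$ of general points (those satisfying Proposition~\ref{prop:t3conics}) the $120$-element set $G(C_t)$ forms a local system, whose monodromy is a homomorphism $\rho\colon\pi_1(\TTT^\circ,o)\to\Sym(G(C_o))$. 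Passing to $k$-element subsets, the orbits of $\operatorname{im}(\rho)$ on $G(C_o)\spset{k}$ are exactly the sets $s$ that give curves lying in one connected equisingular family, hence — since curves in a connected equisingular family are equisingularly isotopic and therefore ambient homeomorphic — curves with the same embedding topology. The work of Section~\ref{sec:mtan3anddelPezzo} relates $C_t$ to a del Pezzo surface $X_t$ of degree $1$: the $120$ special tangent conics correspond bijectively and $\pi_1$-equivariantly to the $120$ pairs $\barDelta=\Delta(\EE_8)/\{\pm\id\}$ of lines on $X_t$, and by the general monodromy theory of Section~\ref{sec:monodromy} the monodromy group of the family of del Pezzo surfaces of degree $1$ is the full $\barW=\OG(\EE_8)/\{\pm\id\}$. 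Hence $\operatorname{im}(\rho)=\barW$ acting on $\barDelta$, and the number of orbits on $G(C_o)\spset{k}\cong\barDelta\spset{k}$ is $N(k)$ by definition. This gives the upper bound: at most $N(k)$ topological types.

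For the lower bound — that these $N(k)$ monodromy orbits really do give pairwise distinct embedding topologies — I would invoke the lattice-theoretic invariant of Section~\ref{sec:embtop} together with Lemma~\ref{lem:AB}. The idea is that to a plane curve $D$ of even-ish combinatorial type one associates the double cover $Y_D\to\PP^2$ branched along $D$ (after the standard even-resolution bookkeeping at the singular points), and the isomorphism class of the primitive part of $H^2(Y_D;\ZZ)$ together with the configuration of classes coming from the singular points and the components of $D$ is a homeomorphism invariant (this is the method of~\cite{Shimada2010}). Artal Bartolo's observation, Lemma~\ref{lem:AB}, should let us package this so that the relevant invariant of $D_{o,s}$ is precisely the $\barW$-orbit of $s\in\barDelta\spset{k}$: two subsets $s,s'$ in different $\barW$-orbits produce non-isomorphic configurations inside the $\EE_8$-lattice (or its relevant overlattice), hence $D_{o,s}$ and $D_{o,s'}$ are not homeomorphic. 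Combined with the previous paragraph, this pins the count to exactly $N(k)$.

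The main obstacle, and the part that needs the most care, is the monodromy computation: showing $\operatorname{im}(\rho)$ is \emph{all} of $\barW$ rather than a proper subgroup. This is where the "unified method" of Section~\ref{sec:monodromy} does the heavy lifting — one must verify that the family of del Pezzo surfaces of degree $1$ (equivalently, $\mtan{3}$-sextics) is rich enough that its fundamental group surjects onto $W(\EE_8)$ modulo $\pm\id$, typically by exhibiting enough explicit vanishing-cycle/Picard–Lefschetz reflections in the monodromy to generate the Weyl group. A secondary delicate point is the translation layer: one must check that the bijection between special tangent conics and line-pairs on $X_t$ is genuinely compatible with the two monodromy actions (i.e. that the geometric construction "conic $\mapsto$ pair of lines" commutes with parallel transport), so that orbits on one side are literally orbits on the other. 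The remaining steps — irreducibility of $\TTT^\circ$, the equisingular-isotopy-implies-homeomorphism principle, and the verification that the lattice invariant separates $\barW$-orbits — are essentially standard given the cited results, though the last requires the brute-force {\tt GAP} check recorded in~\cite{WE8compdata} to confirm that distinct $\barW$-orbits are not accidentally identified by the coarser topological invariant.
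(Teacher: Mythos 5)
Your first half (at most $N(k)$ types) is essentially the paper's argument: the monodromy of the family of $\mtan{3}$-sextics is pulled back, via the birational map $\Pi_p\colon \UUU\birat\TTT$, from the family of del Pezzo surfaces of degree $1$, Proposition~\ref{prop:WE8} makes it surject onto $W(\EE_8)$, and orbits of $\barW$ on $G(C_o)\spset{k}$ give connected families along which the embedding topology is constant. That part is fine.

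The gap is in your separation step. First, a technical point: the branched double cover ``$Y_D\to\PP^2$ branched along $D_{o,s}$'' does not exist, since $\deg D_{o,s}=7+2k$ is odd; and even with some even-resolution surrogate, an invariant built from a cover that depends on the whole of $D_{o,s}$ (hence on $s$) leaves unclear how to compare the data for different $s$ inside one fixed lattice. More seriously, you leave the crucial claim --- that subsets in different $\barW$-orbits cannot be related by a homeomorphism of pairs --- unproved, and you propose to fall back on a brute-force {\tt GAP} verification that ``distinct $\barW$-orbits are not accidentally identified by the coarser topological invariant.'' That cannot work even in principle as a completion of the proof: for $k=60$ there are more than $2.77\times 10^{26}$ orbits, so no finite ad hoc check of this kind is feasible, and the paper's computations are used only to count orbits, not to separate topological types. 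The separation must be, and in the paper is, an a priori argument: a homeomorphism $\Psi\colon(\PP^2,D_{o,s})\isom(\PP^2,D_{o,s\sprime})$ restricts to the complement $M_o=\PP^2-(C_o+\Lambda)$, which is the \emph{same} space for all $s$; since $H_1(M_o)\cong\ZZ$ it lifts to the unique connected double cover $Z_o$, which is identified with a Zariski open subset of the fixed del Pezzo surface $X_b$, and $H_2(Z_o)$ modulo the kernel of the intersection form is the lattice $R(X_b)\cong\EE_8$. Lemma~\ref{lem:AB} forces $\Psi_Z$ to preserve orientation, hence to induce an isometry of $\EE_8$, which lies in $W(\EE_8)$ because $\OG(\EE_8)=W(\EE_8)$; and by tracking the two components of $\varphi_M\inv(\Gamma\cap M_o)$ (whose closures are a line and its Bertini partner) as locally finite cycles, one sees this isometry carries $\delta(s)$ to $\delta(s\sprime)$ up to $\pm\id$, i.e.\ $s$ and $s\sprime$ lie in the same $\barW$-orbit. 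Your proposal gestures at ``Lemma~\ref{lem:AB} should let us package this,'' but the packaging --- the choice of the $s$-independent cover, the identification of its reduced $H_2$ with $\EE_8$, the equality $\OG(\EE_8)=W(\EE_8)$, and the cycle-tracking argument showing the induced isometry sends $\delta(s)$ to $\delta(s\sprime)$ --- is exactly the missing content.
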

Since $|\barDelta|=120$ and $|\barW|=348364800$,
we have the inequality~\eqref{eq:Nbinom}.
Therefore Theorem~\ref{thm:main2} implies Theorem~\ref{thm:main1}.
%
%
%
%
\section{Del Pezzo surfaces}\label{sec:monodromy}
In this section, we investigate del Pezzo surfaces.
For the classical results about del Pezzo surfaces, 
we refer the reader to~\cite{Demazure1980, Manin1986}. 
In Section~\ref{subsec:Pic}, 
we study the Picard lattice of a  del Pezzo surface.
In Section~\ref{subsec:monodromy},
we present a simple method (Corollary~\ref{cor:index})
for studying   the monodromy  
of a family of del Pezzo surfaces.
In Sections~\ref{subsec:cubic}--\ref{subsec:bianticanonical},
we apply this method 
to natural families of del Pezzo surfaces of degree $3$, $2$, and $1$, respectively. 
\subsection{Picard lattice}\label{subsec:Pic}
Let $d$ be a positive integer $\le 6$. 
We put 
\[
n:=9-d.
\]
Let $X$ be a del Pezzo surface of degree $d$,
that is, a smooth surface 
whose anti-canonical class 
$\alpha_{X}:=[-K_X]$ 
is ample of self-intersection number $d$.
The Picard lattice $\Pic(X)$ of $X$ is of rank $n+1$,
and is canonically isomorphic to $H^2(X)$.  
There exists a birational morphism 
\begin{equation*}\label{eq:beta}
\beta\colon X\to {\bfP}^{2}
\end{equation*}
that is a blowing-up at  distinct $n$ points on ${\bfP}^{2}$,
and $\Pic(X)$ has a basis $h, e_1, \dots, e_n$,
where $h$ is the class of the pullback of a line  on ${\bfP}^{2}$, and $e_1, \dots, e_n$
are the classes of exceptional curves.
With respect to this basis, 
the Gram matrix of $\Pic(X)$  is the diagonal matrix
with diagonal entries $1, -1, \dots, -1$, 
and the anti-canonical   class $\alpha_{X}\in \Pic(X)$
is written as 
\begin{equation*}\label{eq:aX}
\alpha_{X}=(3, -1, \dots, -1).
\end{equation*}
\begin{table}
\[
%
\begin{array}{cccrccc}
d & n & \tau_n & |W(R(X))| &  |\Aut(\tau_n)| & |L(X)| & |L^{[n]}(X)\sprime| \\ 
\hline
6 & 3 & A_{1}+A_{2} & {2}^{2} \cdot {3} & 2 & 6 & 2\mystruth{14pt} \\ 
5 & 4 & A_{4} & {2}^{3} \cdot {3} \cdot {5} & 2 & 10 & 5 \\ 
4 & 5 & D_{5} & {2}^{7} \cdot {3} \cdot {5} & 2 & 16 & 16 \\ 
3 & 6 & E_{6} & {2}^{7} \cdot {3}^{4} \cdot {5} & 2 & 27 & 72 \\ 
2 & 7 & E_{7} & {2}^{10} \cdot {3}^{4} \cdot {5} \cdot {7} & 1 & 56 & 576 \\ 
1 & 8 & E_{8} & {2}^{14} \cdot {3}^{5} \cdot {5}^{2} \cdot {7} & 1 & 240 & 17280 \\ 
\end{array}
\]
\caption{Lattice theoretic data}\label{table:Table1}
\end{table}%
The orthogonal complement 
\[
R(X):=(\alpha_{X})\sperp
\]
of $\alpha_{X}$ in $\Pic(X)$ 
is a negative-definite root lattice of type $\tau_n$,
where $\tau_n$ is given in Table~\ref{table:Table1}.
Let $W(R(X))$ denote the Weyl group of the lattice $R(X)$.
The order of $W(R(X))$ is obtained  from the $\ADE$-type $\tau_n$.
The root lattice $R(X)$
has a basis $r_1, \dots, r_n$ 
consisting of $(-2)$-vectors 
whose dual graph is the ordinary Dynkin diagram of type $\tau_n$.
We have 
\begin{equation}\label{eq:rtimes}
\OG(R(X))=W(R(X))\rtimes \Aut(\tau_n),
\end{equation}
where $\Aut(\tau_n)$ is  the group of symmetries  of 
the root system  
$\{r_1, \dots, r_n\}$. 
We put
\[
\OG(\Pic(X), \alpha_{X}):=\set{g\in \OG(\Pic(X))}{\alpha_{X}^g=\alpha_{X}}.
\]
Then we have a natural  homomorphism
\begin{equation}\label{eq:restriction}
\OG(\Pic(X), \alpha_{X}) \to \OG(R(X))
\end{equation}
given by the restriction $g\mapsto g|R(X)$.
It is obvious that~\eqref{eq:restriction} is injective.
\begin{proposition}\label{prop:imageisW}
The image of the homomorphism~\eqref{eq:restriction} is equal to $W(R(X))$.
\end{proposition}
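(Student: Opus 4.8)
The plan is to exhibit $W(R(X))$ on both sides of the containment. One inclusion is straightforward: any $g \in W(R(X))$ acts on $R(X)$ by an isometry fixing each element of $R(X)$ trivially extended, and since $\Pic(X) = \ZZ\alpha_X \oplus R(X)$ is \emph{not} quite a direct sum (the index of $\ZZ\alpha_X \oplus R(X)$ in $\Pic(X)$ divides $d$), one must first check that a reflection in a $(-2)$-vector $r \in R(X)$ preserves the sublattice $\Pic(X)$. But reflections in roots of square norm $-2$ are integral isometries of $\Pic(X)$ (they send $x \mapsto x + (x,r)r$ with $(x,r) \in \ZZ$), and they fix $\alpha_X$ because $\alpha_X \perp r$. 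Hence every element of $W(R(X))$, viewed inside $\OG(\Pic(X))$ via this formula, lies in $\OG(\Pic(X), \alpha_X)$ and restricts to its defining action on $R(X)$. So $W(R(X))$ is contained in the image.

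For the reverse inclusion I would argue that the image cannot be all of $\OG(R(X))$: by~\eqref{eq:rtimes} it suffices to show that no nontrivial element of $\Aut(\tau_n)$ lifts to an isometry of $\Pic(X)$ fixing $\alpha_X$. The key structural fact is that $\Pic(X)$, being $\ZZ\alpha_X + R(X)$ with $\ZZ\alpha_X$ and $R(X)$ primitive but the sum of finite index, is recovered from the pair $(R(X), \alpha_X^\perp\text{-glue data})$: an isometry of $R(X)$ extends to $\Pic(X)$ fixing $\alpha_X$ \emph{if and only if} it acts compatibly on the discriminant-form gluing between $R(X)^\vee/R(X)$ and the corresponding cyclic piece generated by (a fraction of) $\alpha_X$. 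I would make this precise via the standard discriminant-form / overlattice dictionary (Nikulin): $\OG(\Pic(X), \alpha_X)$ is exactly the subgroup of $\OG(R(X))$ whose induced action on the discriminant group $A_{R(X)} = R(X)^\vee/R(X)$ preserves the isotropic subgroup corresponding to the overlattice $\Pic(X) \supset \ZZ\alpha_X \oplus R(X)$. So the claim reduces to: $W(R(X))$ is precisely the stabilizer in $\OG(R(X))$ of that isotropic subgroup.

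The cleanest way to finish is to recall that $W(R(X))$ acts \emph{trivially} on the discriminant group $A_{R(X)}$ for every $\ADE$ root lattice, while $\Aut(\tau_n) = \OG(R(X))/W(R(X))$ acts faithfully on $A_{R(X)}$ (for $\tau_n \neq E_7, E_8$, where $\Aut(\tau_n)$ is trivial, there is nothing to prove; for the remaining cases the outer action on the discriminant form is well known to be faithful). Thus an element of $\OG(R(X))$ preserving the relevant isotropic subgroup and inducing the identity — which is forced once one checks that the isotropic subgroup singled out by $\alpha_X$ is \emph{not} preserved by the outer automorphism in those cases — must lie in $W(R(X))$. Concretely, for $d \geq 3$ (i.e.\ $\tau_n \in \{A_1+A_2, A_4, D_5, E_6\}$), the discriminant group is a single nontrivial cyclic (or product) group and one checks directly that the class of $\alpha_X/\!\sqrt{\phantom{d}}$ generates a subgroup swapped by the diagram automorphism, so no lift exists; for $d = 1, 2$ the group $\Aut(\tau_n)$ is already trivial and the statement is immediate.

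\textbf{Main obstacle.} The delicate point is the bookkeeping with discriminant forms: identifying exactly which isotropic subgroup of $A_{R(X)}$ corresponds to the overlattice $\Pic(X)$, and verifying case-by-case (using Table~\ref{table:Table1}) that the diagram automorphism $\Aut(\tau_n)$ does not stabilize it. For $d = 1$ and $d = 2$ there is no obstacle since $\Aut(E_8) = \Aut(E_7) = 1$ already forces $\OG(R(X)) = W(R(X))$; the real content is concentrated in the small-degree cases, and there a short explicit computation of the $\alpha_X$-glue vector modulo $R(X)$ against the known action of the Dynkin involution suffices. I would present the $d \in \{1,2\}$ cases in one line and handle $d \in \{3,4,5,6\}$ by this uniform discriminant-form argument, optionally cross-checking the orders against $|W(R(X))|$ and $|\OG(\Pic(X),\alpha_X)|$ — the latter being computable from the fact that $\OG(\Pic(X),\alpha_X)$ injects into $\OG(R(X))$ by~\eqref{eq:restriction}.
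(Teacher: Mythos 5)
Your proposal is correct and follows essentially the same route as the paper's own proof: both characterize the image of~\eqref{eq:restriction} via the gluing of the discriminant groups of $\ZZ\alpha_X$ and $R(X)$ inside the unimodular lattice $\Pic(X)$, note that reflections in $(-2)$-vectors act trivially on the discriminant group while any nontrivial element of $\Aut(\tau_n)$ acts nontrivially, and conclude using the decomposition~\eqref{eq:rtimes}. The only cosmetic caveat is that the glue subgroup lives in $d_A\oplus d_R$ as the graph of an isomorphism $d_A\cong d_R$ (not inside $A_{R(X)}$ alone), so for an isometry fixing $\alpha_X$ the condition of preserving it is precisely triviality of the induced action on $d_R$, which is what your argument in effect uses.
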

\begin{proof}
We put $A:=\ZZ\alpha_{X}$, $R:=R(X)$, and consider 
their discriminant groups 
\[
d_A:=A\dual/A, \quad d_R:=R\dual/R,
\]
where $A\dual$ and $R\dual$ are 
the dual lattices of $A$ and $R$, respectively.
The unimodular lattice $\Pic(X)$,
which is a submodule of $A\dual \oplus R\dual$
containing $A\oplus R$,
gives rise to the graph 
\[
\Pic(X)/(A\oplus R)\;\subset\; d_A\times d_R
\] 
of an isomorphism $d_A\cong d_R$.
Hence 
an isometry $g\in \OG(R)$ extends to 
an isometry 
of $\Pic(X)$ that acts on $A$ trivially
if and only if 
$g$ acts on $d_R$ trivially.
It is easy to see that a reflection with respect to a $(-2)$-vector acts trivially on the discriminant group.
On the other hand, 
a direct calculation shows that a  non-trivial element of $\Aut(\tau_n)$ (if there exists any) acts
 non-trivially on $d_R$.
 Hence Proposition~\ref{prop:imageisW} follows from~\eqref{eq:rtimes}.
\end{proof}
A smooth rational curve $l$ on $X$ is called a \emph{line} if $\intf{l, \alpha_{X}}=1$.
Every line has a self-intersection number $-1$, and 
the set of lines in $X$ is identified with 
\[
L(X):=\set{\lambda\in \Pic(X)}{\intf{\lambda, \lambda}=-1,\;\intf{\lambda, \alpha_{X}}=1}.
\]
By abuse of language,
we sometimes call an element of $L(X)$ a line.
We can enumerate all the elements of $L(X)$ explicitly.
Let  $L^{[n]}(X)$ denote the set of all ordered $n$-tuples 
\[
\blambda=[\lambda_1, \dots, \lambda_n]
\]
of lines such that $\intf{\lambda_i, \lambda_j}=0$ for any $i, j$ with  $i\ne j$.
By Proposition~\ref{prop:imageisW} and~\cite[Proposition II-4]{Demazure1980},  we have the following:
\begin{corollary}\label{cor:freeandtransitive}
The natural action of $\OG(\Pic(X), \alpha_{X})\cong W(R(X))$ 
on $L^{[n]}(X)$ is free and transitive.
\qed
\end{corollary}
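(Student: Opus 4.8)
The plan is to reduce the corollary to the cited result of Demazure together with Proposition~\ref{prop:imageisW}. By \cite[Proposition II-4]{Demazure1980}, the Weyl group $W(R(X))$ acts simply transitively on the set of ordered $n$-tuples of pairwise orthogonal lines $L^{[n]}(X)$; this is the content of the statement that $W(R(X))$ permutes the ``exceptional configurations'' freely and transitively, where an exceptional configuration is precisely an ordered basis $[\lambda_1,\dots,\lambda_n]$ of the type recorded in $L^{[n]}(X)$. (One should check that Demazure's notion of exceptional configuration coincides with ours: the tuples $[e_1,\dots,e_n]$ coming from blow-up structures are visibly in $L^{[n]}(X)$, they form one $W(R(X))$-orbit by Demazure, and conversely any tuple in $L^{[n]}(X)$ arises this way — this last point is itself part of Demazure's classification of lines via the action on $\Pic(X)$.)

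First I would make explicit the compatibility between the $W(R(X))$-action (via reflections on $\Pic(X)$) and the $\OG(\Pic(X),\alpha_X)$-action. Since every reflection in a $(-2)$-vector $r \in R(X)$ fixes $\alpha_X$ (because $\langle r,\alpha_X\rangle = 0$), it defines an element of $\OG(\Pic(X),\alpha_X)$, and under the injective restriction map~\eqref{eq:restriction} this element maps to the corresponding reflection in $\OG(R(X))$. By Proposition~\ref{prop:imageisW}, the image of~\eqref{eq:restriction} is exactly $W(R(X))$, so~\eqref{eq:restriction} restricts to an isomorphism $\OG(\Pic(X),\alpha_X) \xrightarrow{\sim} W(R(X))$ — this is the isomorphism named in the corollary. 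Both groups act on $L(X)$ (hence on $L^{[n]}(X)$) through this identification: an isometry of $\Pic(X)$ fixing $\alpha_X$ preserves the numerical conditions $\langle\lambda,\lambda\rangle=-1$, $\langle\lambda,\alpha_X\rangle=1$ and the orthogonality relations defining $L^{[n]}(X)$, so the action is well-defined, and it agrees with the Weyl-group action used by Demazure.

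Then I would simply invoke \cite[Proposition II-4]{Demazure1980} to conclude that the action on $L^{[n]}(X)$ is free and transitive. Transitivity: any two elements of $L^{[n]}(X)$ both arise from blow-up structures, and the change-of-basis isometry between the two corresponding bases $h,e_1,\dots,e_n$ of $\Pic(X)$ fixes $\alpha_X$ (by~\eqref{eq:aX}, since $\alpha_X$ has the same coordinate expression in every such basis) and carries one tuple to the other; alternatively this is immediate from Demazure. Freeness: if $g \in \OG(\Pic(X),\alpha_X)$ fixes some $\blambda = [\lambda_1,\dots,\lambda_n]$, then $g$ fixes each $\lambda_i$; but $\lambda_1,\dots,\lambda_n$ together with $\alpha_X$ span $\Pic(X)\otimes\QQ$ (indeed one recovers $h$ and the $e_j$ as rational combinations), so $g = \id$. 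I do not expect a serious obstacle here; the only point requiring care is the dictionary between Demazure's ``exceptional configurations'' and our set $L^{[n]}(X)$, and that each tuple in $L^{[n]}(X)$ genuinely comes from a birational morphism to $\PP^2$ — both of which are already contained in the results of~\cite{Demazure1980} being cited. Hence the corollary follows, and I would present the proof as the two-line deduction with a brief remark spelling out the identification of actions.
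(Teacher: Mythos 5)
Your proposal is correct and takes essentially the same route as the paper, which deduces the corollary directly from Proposition~\ref{prop:imageisW} combined with \cite[Proposition II-4]{Demazure1980}. Your additional verifications (compatibility of the two actions under the restriction isomorphism, transitivity via the change of basis fixing $\alpha_X$, and freeness from the fact that $\alpha_X,\lambda_1,\dots,\lambda_n$ span $\Pic(X)\otimes\mathbb{Q}$) merely spell out details the paper leaves implicit.
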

For $\blambda=[\lambda_1, \dots, \lambda_n]\in L^{[n]}(X)$,
we have a birational morphism
to a projective plane 
that is the contraction of the lines 
$l_1, \dots, l_n$ whose classes are $\lambda_1, \dots, \lambda_n$.
We denote this blowing-down morphism by
\begin{equation}\label{eq:betalambda}
\beta_{\blambda}\;\colon\; X \;\to\; \bP(X/\blambda).
\end{equation}
\subsection{Monodromy}\label{subsec:monodromy}
Let $f\colon \XXX \to \UUU$ be a family of del Pezzo surfaces of degree  $d$.
We assume that the parameter space $\UUU$ is smooth and irreducible.
For a point $u$ of $\UUU$, we denote by $X_u$ the fiber $f\inv(u)$.
Instead of $\alpha_{X_u}$, we denote by 
$\alpha_u\in \Pic(X_u)$ 
the anti-canonical class of $X_u$.
We choose a base point $b\in \UUU$.
The local system 
\[
R^2f_* \ZZ\; \to\; \UUU
\]
is a family of the lattices $\Pic(X_u)\cong H^2(X_u)$,
and it has a section $u\mapsto \alpha_{u}$.
Hence we obtain a monodromy homomorphism 
\begin{equation}\label{eq:mon}
\mon\;\colon\; \pione(\UUU, b)\;\to\; \OG(\Pic(X_b), \alpha_b)\cong W(R(X_b)).
\end{equation}
We investigate  the surjectivity of the monodromy homomorphism $\mon$. 
\par
We consider the \'etale covering
\[
\LLL^{[n]}\to\UUU
\]
whose fiber over $u\in\UUU$ is the set $L^{[n]}(X_u)$ of ordered sets of disjoint $n$ lines in $X_u$.
The associated monodromy action 
of $\pione(\UUU, b)$ on $L^{[n]}(X_b)$ 
is given by
\[
[\gamma]\;\;\mapsto\;\;(\blambda\;\mapsto \; \blambda^{ \mon([\gamma])})
\]
for $[\gamma]\in \pione(\UUU, b)$,
where $\blambda\mapsto \blambda^{ \mon([\gamma])}$  
denotes the action of the element $ \mon([\gamma])$ of 
$ \OG(\Pic(X_b),  \alpha_b)$ on $L^{[n]}(X_b)$.
The orbits of the monodromy action on $L^{[n]}(X_b)$ 
correspond bijectively to the connected components of $\LLL^{[n]}$.
Hence, by Corollary~\ref{cor:freeandtransitive}, we obtain the following:
\begin{corollary}\label{cor:index}
The index of the image 
of  the monodromy homomorphism~$\mon$ in $W(R(X_b))$ is 
equal to the number of the connected components of $\LLL^{[n]}$.
\qed
\end{corollary}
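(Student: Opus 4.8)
The statement to prove is Corollary~\ref{cor:index}, which has already been furnished with a one-line proof in the excerpt (the combination of the covering-space correspondence with Corollary~\ref{cor:freeandtransitive}). So the task is really to reconstruct that argument in full detail.

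\medskip

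The plan is to deduce the statement from the standard dictionary between connected finite étale covers of a space and transitive actions of its fundamental group, applied to the cover $\LLL^{[n]}\to\UUU$. First I would record that $\LLL^{[n]}\to\UUU$ is a finite étale cover: its fiber $L^{[n]}(X_u)$ over $u$ is finite (it is a subset of the finite set $L(X_u)^n$, and $|L^{[n]}(X_b)|=|W(R(X_b))|$ by Corollary~\ref{cor:freeandtransitive}), and local triviality follows from the fact that the lines in a del Pezzo surface deform in families, equivalently from the constancy of $\Pic(X_u)$ as a local system $R^2f_*\ZZ$ with its distinguished section $\alpha_u$. Next I would invoke the equivalence of categories between covers of $\UUU$ and $\pione(\UUU,b)$-sets: the set of connected components of $\LLL^{[n]}$ is in bijection with the set of orbits of the monodromy action of $\pione(\UUU,b)$ on the fiber $L^{[n]}(X_b)$.

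\medskip

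Then I would identify that orbit count with the index in question. Let $G:=W(R(X_b))\cong\OG(\Pic(X_b),\alpha_b)$ and let $H:=\mathrm{Im}(\mon)\subseteq G$ be the image of the monodromy homomorphism. The monodromy action of $\pione(\UUU,b)$ on $L^{[n]}(X_b)$ factors through $H$ acting via the restriction of the $G$-action. By Corollary~\ref{cor:freeandtransitive}, $G$ acts freely and transitively on $L^{[n]}(X_b)$, so choosing a base point $\blambda_0\in L^{[n]}(X_b)$ gives a $G$-equivariant bijection $G\xrightarrow{\sim}L^{[n]}(X_b)$, $g\mapsto \blambda_0^{\,g}$, where $G$ acts on itself by right multiplication. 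Under this bijection the $H$-orbits on $L^{[n]}(X_b)$ correspond exactly to the left cosets $Hg$, whose number is the index $[G:H]$. Combining with the previous paragraph, the number of connected components of $\LLL^{[n]}$ equals $[G:H]=[W(R(X_b)):\mathrm{Im}(\mon)]$, which is the assertion.

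\medskip

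I do not anticipate a genuine obstacle here; the only point requiring a word of care is the étaleness (equivalently, local triviality as a covering) of $\LLL^{[n]}\to\UUU$, which should be attributed to the fact that the relative Picard sheaf $R^2f_*\ZZ$ is a local system of lattices with locally constant intersection form and constant anticanonical section, so the subset $L^{[n]}(X_u)$ of ordered $n$-tuples of disjoint $(-1)$-classes is locally constant; and each such class is effective on every fiber since on a del Pezzo surface a class $\lambda$ with $\lambda^2=-1$, $\lambda\cdot\alpha=1$ is represented by a unique line, so $\LLL^{[n]}$ is genuinely the moduli of ordered $n$-tuples of disjoint lines. Everything else is the formal covering-space dictionary plus Corollary~\ref{cor:freeandtransitive}.
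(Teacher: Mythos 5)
Your argument is correct and is essentially the paper's own: the paper likewise deduces the corollary from the bijection between connected components of $\LLL^{[n]}$ and orbits of the monodromy action on the fiber $L^{[n]}(X_b)$, combined with the free and transitive action of $W(R(X_b))$ from Corollary~\ref{cor:freeandtransitive}, which turns orbits of $\mathrm{Im}(\mon)$ into cosets (your only slip is calling the orbits ``left cosets $Hg$'' when, for the right action $\blambda\mapsto\blambda^{g}$, they are the cosets $gH$ --- the count is the same either way). Your added justification of the \'etaleness of $\LLL^{[n]}\to\UUU$ is a fine supplement to what the paper simply asserts.
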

We fix a projective plane ${\bfP}^{2}$.
For an ordered set  $\bp= [p_1, \dots, p_n]\in ({\bfP}^{2})^n$ of distinct $n$ points of ${\bfP}^{2}$, 
we denote by 
\[
\beta_{\bp}\;\;\colon\;\;  Y_{\bp}\;\to\; {\bfP}^{2}
\]
 the blowing-up at 
the points $p_1, \dots, p_n$.
\begin{definition}\label{def:PPPn}
Let $\PPP_n$ be the Zariski open subset of $({\bfP}^{2})^n$
consisting of all ordered sets  $\bp=[p_1, \dots, p_n]$ of distinct $n$ points of ${\bfP}^{2}$
such that 
\begin{enumerate}[label={\rm{(\roman *)}}]
\item  no three points in  $\bp$ are on a line,
\item  no six points in  $\bp$ are on a conic, and 
\item   there exists no cubic curve passing through $7$  points in  $\bp$
and having a double point at the $8$th point.
\end{enumerate}
\end{definition}
\begin{theorem}[Th\'eor\`eme~II-1 in~\cite{Demazure1980}]
\label{thm:PPPn}
The surface  $Y_{\bp}$ is a del Pezzo surface of degree $d=9-n$ if and only if $\bp\in \PPP_n$.
\qed
\end{theorem}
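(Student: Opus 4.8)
The plan is to reduce the statement to a short numerical fact about the Picard lattice and then dispose of that fact by a finite case analysis. Since $Y_{\bp}$ is the blow-up of $\PP^2$ at $n$ \emph{distinct} points, it is automatically a smooth projective rational surface with $K_{Y_{\bp}}^2 = 9-n = d$, so the whole content of the theorem is that the anticanonical class $-K_{Y_{\bp}} = 3h - e_1 - \cdots - e_n$ is ample exactly when conditions~(i)--(iii) of Definition~\ref{def:PPPn} hold. For the ``only if'' direction I would argue by contraposition: if one of (i)--(iii) fails, the strict transform on $Y_{\bp}$ of the offending plane curve is an effective curve meeting $-K_{Y_{\bp}}$ non-positively --- for instance, the strict transform of a line through exactly three of the $p_i$ has class $h - e_1 - e_2 - e_3$ and anticanonical degree $0$, and likewise for a conic through six of them ($2h - e_1 - \cdots - e_6$) or a nodal cubic through seven of them with the node at the eighth ($3h - 2e_1 - e_2 - \cdots - e_8$); passing through further $p_i$ only lowers the degree. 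Since an ample class meets every curve positively, $-K_{Y_{\bp}}$ is not ample.

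For the ``if'' direction, assume (i)--(iii). Because $(-K_{Y_{\bp}})^2 = d \ge 1$, the Nakai--Moishezon criterion reduces the ampleness of $-K_{Y_{\bp}}$ to proving $(-K_{Y_{\bp}}) \cdot C > 0$ for every irreducible curve $C \subset Y_{\bp}$. If $C$ is contracted by $\beta_{\bp}$, then, the $p_i$ being distinct, $C$ is one of the $n$ exceptional curves and $(-K_{Y_{\bp}})\cdot C = 1$. Otherwise $\beta_{\bp}$ is an isomorphism near the generic point of $C$, so it maps $C$ birationally onto a plane curve $C_0$ of degree $d_0 := C\cdot h \ge 1$, and $C$ is the strict transform of $C_0$; hence $[C] = d_0 h - \sum_i m_i e_i$ with $m_i := \operatorname{mult}_{p_i}(C_0) \ge 0$ and $(-K_{Y_{\bp}})\cdot C = 3 d_0 - \sum_i m_i$. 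So it suffices to prove the numerical claim: \emph{if an irreducible plane curve $C_0$ of degree $d_0$ satisfies $\sum_i \operatorname{mult}_{p_i}(C_0) \ge 3 d_0$, then $\bp$ violates one of (i)--(iii)}.

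To prove this claim I would combine the exact genus identity for the strict transform, $0 \le p_a(C) = \binom{d_0-1}{2} - \sum_i \binom{m_i}{2}$, with the Cauchy--Schwarz bound $\sum_i m_i^2 \ge \tfrac1n \bigl(\sum_i m_i\bigr)^2$. Together with $\sum_i m_i \ge 3 d_0$ and $n \le 8$, these force $d_0 \le 4$; the case $d_0 = 4$ is then excluded on integrality grounds, leaving $d_0 \in \{1, 2, 3\}$, with $d_0 = 3$ possible only when $n = 8$. Each surviving case is matched with a forbidden configuration: a line is smooth, so $d_0 = 1$ puts at least three $p_i$ on it, violating~(i); an irreducible conic is smooth, so $d_0 = 2$ puts at least six $p_i$ on it, violating~(ii); and for $d_0 = 3$ the genus identity forces exactly one $m_i$ to equal $2$, all others to equal $1$, and $n = 8$, i.e.\ a cubic with a node at one $p_i$ through the remaining seven, violating~(iii). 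I expect the main obstacle to be precisely this last step --- bookkeeping which geometric configurations the surviving numerical classes can realize, and verifying that the cubic case collapses onto exactly condition~(iii) --- rather than the derivation of $d_0 \le 4$, which is routine; alternatively the enumeration can be handled via the action of the Weyl group on the set of exceptional classes, as in Demazure's treatment.
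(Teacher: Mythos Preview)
The paper does not give its own proof of this statement: note the \texttt{\textbackslash qed} immediately following the theorem, and the attribution to Demazure in the heading. Theorem~\ref{thm:PPPn} is quoted from~\cite{Demazure1980} and used as a black box; there is nothing in the paper to compare your argument against.

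That said, your proof sketch is correct and essentially complete. The ``only if'' direction is immediate from the three explicit classes you list. For the ``if'' direction, the reduction via Nakai--Moishezon to $3d_0 > \sum_i m_i$ is clean, and your combination of the genus inequality $p_a(C) \ge 0$ with Cauchy--Schwarz is the standard device for bounding $d_0$. The case analysis is right: for $d_0 = 4$ equality in Cauchy--Schwarz would force $m_i = 3/2$, which is impossible, and one checks directly that no integral solution with $\sum m_i \ge 12$, $n \le 8$, and $\sum m_i(m_i-1) \le 6$ exists; for $d_0 = 3$ the bound $\sum \binom{m_i}{2} \le 1$ forces exactly one $m_i = 2$ and then $n = 8$ with the remaining $m_j = 1$, recovering~(iii); and $d_0 \le 2$ gives~(i) or~(ii) since irreducible lines and conics are smooth. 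The only spot where a reader might want one more line is the exclusion of $d_0 = 4$: rather than invoking ``integrality grounds'' alone, it is worth writing out that $\sum m_i = 12$ forces $\sum m_i^2 = 18$, hence equality in Cauchy--Schwarz, while $\sum m_i \ge 13$ already violates $\sum m_i^2 \le \sum m_i + 6$.
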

For $\bp= [p_1, \dots, p_n] \in \PPP_n$, we have a distinguished element 
\[
\brho_{\bp}=[\rho_1, \dots, \rho_n]
\]
of $L^{[n]}(Y_{\bp})$, 
where $\rho_i$ is the class of the exceptional curve over $p_i$.
We consider the incidence  variety
\[
\III:=\bigset{(u,\bp, g)}{%
\parbox{7.7cm }{$u \in \UUU, \, \bp\in \PPP_n$, 
and $g$ is an isomorphism $X_u\isom Y_{\bp}$
}%
}
\]
with the projections
\[
\UUU\;\mapleftsp{\varpi_{1}}\;\III\;\maprightsp{\varpi_2}\; \PPP_n.
\]
A point of $\LLL^{[n]}$ is written as $(u, \blambda)$,
where $u\in \UUU$ and $\blambda\in L^{[n]}(X_u)$.
Then we can lift $\varpi_1\colon \III\to \UUU$  to 
$\varpi_1^{\LLL}\colon \III\to \LLL^{[n]}$
by setting
\[
\varpi_1^{\LLL}(u, \bp, g):=(u, \blambda),
\]
where $\blambda$ is the element of 
$L^{[n]}(X_u)$ that is mapped to $\brho_{\bp}\in L^{[n]}(Y_{\bp})$ by 
the bijection $L^{[n]}(X_u) \isom L^{[n]}(Y_{\bp}) $ 
induced by  $g\colon X_u\isom Y_{\bp}$:
\[
\begin{array}{ccccc}
\LLL^{[n]} & \mapleftsp{\varpi_1^{\LLL}} & \III &  \maprightsp{\varpi_2} & \PPP_n\mystruth{15pt} \\
\mapdownleft{} &\rlap{$\swarrow$\scriptsize$\varpi_1$} &&& \\
\,\UUU\rlap{\;.}&&&&
\end{array}
\]
The fiber of $\varpi_1^{\LLL}$ over $(u, \blambda)\in \LLL^{[n]}$ is the variety of all isomorphisms
\[
\bP(X_u/\blambda)\;\isom\;{\bfP}^{2},
\]
where $\bP(X_u/\blambda)$ is defined in~\eqref{eq:betalambda}.
Indeed, if we have $\varpi_1^{\LLL}(u,\bp, g)=(u, \blambda)$, then the isomorphism $g\colon X_u\isom Y_{\bp}$ maps  
the exceptional curves of the blowing-down $\beta_{\blambda}\colon X_u\to  \bP(X_u/\blambda)$ to
the exceptional curves of the blowing-down $\beta_{\bp}\colon Y_{\bp}\to {\bfP}^{2}$, 
and hence 
$g$ induces an isomorphism $\bP(X_u/\blambda)\isom{\bfP}^{2}$.
Conversely, if we are given a point $(u, \blambda)$ of $\LLL^{[n]}$ and an isomorphism $\bar{g}\colon \bP(X_u/\blambda)\isom{\bfP}^{2}$,
then, setting $\bp$ to be the image of the centers of 
$\beta_{\blambda}$ by $\bar{g}$ 
and lifting $\bar{g}$
to the isomorphism $g\colon X_u\isom Y_{\bp}$ between  their  blow-ups, 
we obtain  a point $(u,\bp, g)$  in the fiber of $\varpi_1^{\LLL}$ over $(u, \blambda)$.
\par
The variety of isomorphisms 
$\bP(X_u/\blambda)\cong {\bfP}^{2}$
is isomorphic to $\PGL(3, \CC)$.
Hence $\varpi_1^{\LLL}$ gives a bijection from the set of connected components of $\III$
to that of $\LLL^{[n]}$.
We investigate the connected components  of $\III$ by 
the second projection $\varpi_2\colon \III \to \PPP_n$.
%
%
\subsection{Family of cubic surfaces}\label{subsec:cubic}
The anti-canonical model of a del Pezzo surface of degree $d=3$
is a smooth cubic surface.
We fix a projective space $\PP^3$,
and consider the family $\XXX\to \UUU$ of smooth cubic surfaces, where 
$\UUU$ is  the Zariski open subset of 
$|\OOO_{\PP^3} (3)|\cong \PP^{19}$
parameterizing all smooth cubic surfaces.
\par
The following reproduces the result of Harris~\cite{Harris1979}  on the Galois group of $27$ lines in a smooth cubic surface.
\begin{proposition}\label{prop:WE6}
For the  family 
 $\XXX\to \UUU$ 
 of smooth cubic surfaces,
 the monodromy homomorphism $\mon$  
 is surjective onto the Weyl group $W(R(X_b))$ 
 of type $E_6$.
\end{proposition}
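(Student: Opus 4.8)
The plan is to apply Corollary~\ref{cor:index}: it suffices to show that the étale covering $\LLL^{[6]}\to\UUU$ is connected, equivalently (via the bijection induced by $\varpi_1^{\LLL}$) that the incidence variety $\III$ is connected. We study $\III$ through the second projection $\varpi_2\colon\III\to\PPP_6$. First I would observe that $\PPP_6$ is connected: it is a Zariski open subset of the irreducible variety $(\PP^2)^6$. Next, the fiber of $\varpi_2$ over a point $\bp\in\PPP_6$ consists of all pairs $(u,g)$ with $u\in\UUU$ and $g\colon X_u\isom Y_{\bp}$ an isomorphism; since $Y_{\bp}$ is a del Pezzo surface of degree $3$ by Theorem~\ref{thm:PPPn}, its anti-canonical model is a smooth cubic surface, which determines a point $u\in\UUU$, and the remaining data is the choice of an anti-canonical embedding, i.e. a coordinate choice, so this fiber is a $\PGL(4,\CC)$-torsor and in particular irreducible. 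Hence $\III$ is irreducible, so connected, and therefore $\LLL^{[6]}$ is connected.

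Once connectedness of $\LLL^{[6]}$ is established, Corollary~\ref{cor:index} gives that the image of $\mon$ has index $1$ in $W(R(X_b))$, i.e. $\mon$ is surjective onto the Weyl group of type $E_6$, which is exactly the assertion of Proposition~\ref{prop:WE6}.

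The point that needs the most care is the description of the $\varpi_2$-fiber: I must verify that associating to $(u,\bp,g)$ the anti-canonical model of $X_u$ (equivalently of $Y_{\bp}$) really is a morphism to $\UUU\subset|\OOO_{\PP^3}(3)|$, and that the fiber of $\varpi_2$ over $\bp$ is a single $\PGL(4,\CC)$-orbit acting freely — this uses that the anti-canonical system of a degree-$3$ del Pezzo surface is very ample with image a smooth cubic surface, and that $H^0(X,\alpha_X)$ has dimension $4$ so the embedding is into a fixed $\PP^3$ up to $\PGL(4,\CC)$. The analogous bookkeeping will reappear for degrees $2$ and $1$ with $\PGL(4,\CC)$ replaced by the relevant group of automorphisms of the (weighted) anti-canonical ambient space, so it is worth stating cleanly here.

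I expect the main obstacle is purely a matter of making the fibration picture precise rather than any deep input: the only subtlety is ensuring that $\III$ is not merely a union of irreducible pieces each dominating $\PPP_6$ but genuinely irreducible, which follows because a dominant morphism with irreducible base and irreducible fibers of constant dimension has irreducible total space. With that in hand the proof is short and the result recovers Harris's theorem that the Galois group of the $27$ lines is the full Weyl group $W(E_6)$.
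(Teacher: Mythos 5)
Your proposal is correct and follows essentially the same route as the paper: the paper likewise identifies the fiber of $\varpi_2$ over $\bp\in\PPP_6$ with the variety of isomorphisms $\PP^3\isom|\alpha_{\bp}|\dual$, which is a copy of $\PGL(4,\CC)$, concludes that $\III$ and hence $\LLL^{[6]}$ is connected, and then applies Corollary~\ref{cor:index}. The only (minor) difference is that you justify irreducibility of $\III$ by a general ``irreducible base, irreducible equidimensional fibers'' lemma, which strictly speaking needs the $\PGL(4,\CC)$-bundle structure (or some openness/properness) rather than constancy of fiber dimension alone, whereas the paper passes directly to connectedness at the same level of detail; this does not affect correctness.
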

\begin{proof}
For any point $\bp\in \PPP_6$,
the fiber of $\varpi_2$ over $\bp$ is 
the variety of all isomorphisms between $\PP^3$
and the projective space 
\[
|\alpha_{\bp}|\dual=\PP^{*}(H^0(Y_{\bp}, \OOO(\alpha_{\bp}))),
\]
where $\alpha_{\bp}$ is the anti-canonical class of $Y_{\bp}$.
This variety is isomorphic to $\PGL(4, \CC)$.
Hence $\LLn{6}$ is connected.
Therefore $\mon$ is surjective by Corollary~\ref{cor:index}.
\end{proof}
%
%
See~\cite{Shimada2026} for an application of Proposition~\ref{prop:WE6}.
%

\subsection{Family of quartic double planes}\label{subsec:quartic}
The anti-canonical model of a del Pezzo surface
$X$ of degree $d=2$ is a double plane
$X\to \PP^2$ branching along a smooth quartic curve.
%
We fix a projective plane $\PP^2$. 
Let  $\UUU$ be the Zariski open subset of 
$|\OOO_{\PP^2} (4)|\cong \PP^{14}$
that parameterizes all smooth quartic curves in $\PP^2$.
For each $u\in \UUU$,
we denote by  $B_u\subset \PP^2$  the corresponding quartic curve.
We consider the family 
 $\XXX\to \UUU$ 
 of smooth quartic double planes,
that is, 
 $\XXX$ is the double cover of $\PP^2\times \UUU$ with the projection  $ \XXX\to \UUU$ 
 whose fiber  $X_u$ over $u\in \UUU$
 is  the double plane $X_u\to \PP^2$
branching along $B_u$.
\par
The following proposition was stated in~\cite{Shimada2022}, but the proof was incomplete.
A weaker result concerning
the Galois group of the $28$ bitangents of a smooth quartic curve had been proved  in  Harris~\cite{Harris1979}.
\begin{proposition}\label{prop:WE7}
For the  family 
 $ \XXX\to \UUU$ 
 of smooth quartic double planes,
 the monodromy homomorphism $\mon$ 
 is surjective onto the Weyl group $W(R(X_b))$ 
 of type~$E_7$.
\end{proposition}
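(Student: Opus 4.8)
The plan is to follow the pattern of the proof of Proposition~\ref{prop:WE6}, the new feature being that the fibre of the second projection fails to be connected, so one extra input is required. Write $G$ for the image of $\mon$ in $W(R(X_b))\cong W(E_7)$. By Corollary~\ref{cor:index}, and since $\varpi_1^{\LLL}$ induces a bijection between the connected components of $\III$ and those of $\LLL^{[n]}$, the index $[W(E_7):G]$ equals the number of connected components of $\III$, which I would bound by means of $\varpi_2\colon\III\to\PPP_7$. First I would compute the fibre of $\varpi_2$ over a point $\bp\in\PPP_7$. As the anti-canonical morphism of a del Pezzo surface of degree $2$ is intrinsic and, for $X_u$, is the double plane $X_u\to\PP^2$, any isomorphism $g\colon X_u\isom Y_\bp$ descends to an isomorphism $\bar g\colon\PP^2\isom|\alpha_\bp|\dual$ carrying the branch quartic $B_u$ of $X_u$ onto the branch quartic $B_\bp$ of $Y_\bp\to|\alpha_\bp|\dual$; conversely, an arbitrary $\bar g\in\mathrm{Isom}(\PP^2,|\alpha_\bp|\dual)\cong\PGL(3,\CC)$ determines the point $u\in\UUU$ with $B_u=\bar g\inv(B_\bp)$ (a smooth quartic) and lifts, in exactly two ways, to an isomorphism $g\colon X_u\isom Y_\bp$, the two lifts being interchanged by the covering involution of $X_u\to\PP^2$. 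Hence $\varpi_2$ is surjective and is a fibre bundle over the irreducible base $\PPP_7$ whose fibre is a double covering space of $\PGL(3,\CC)$; therefore $\III$ has at most two connected components, and so $[W(E_7):G]\le2$.

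To pin the index down to $1$, I would next exhibit a genuine reflection in $G$. The complement of $\UUU$ in $|\OOO_{\PP^2}(4)|$ is an irreducible hypersurface whose general member is a one-nodal quartic, over which the double plane acquires a single ordinary double point. By the Picard--Lefschetz theorem, a small loop $\gamma_0$ in $\UUU$ around this hypersurface induces on $\Pic(X_b)=H^2(X_b)$ the transformation $x\mapsto x+\intf{x,\delta}\,\delta$, where $\delta$ is the vanishing class, a primitive vector with $\intf{\delta,\delta}=-2$. Since $\mon([\gamma_0])$ fixes $\alpha_b$ and $\Pic(X_b)$ is unimodular, $\intf{\delta,\alpha_b}=0$, so $\delta$ is a root of $R(X_b)$ and $\mon([\gamma_0])$ is, in $W(R(X_b))$, the reflection $s_\delta$, of determinant $-1$ on $R(X_b)$. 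To conclude, suppose $G\ne W(E_7)$: then $G$ has index $2$, hence is the kernel of a surjection $\chi\colon W(E_7)\to\{\pm1\}$; since $E_7$ is simply laced, its reflections form one conjugacy class and generate $W(E_7)$, so $\chi$ must send every reflection to $-1$ and thus equals the sign homomorphism $\det$; but then $s_\delta\in G$ contradicts $\det(s_\delta)=-1$. Therefore $G=W(E_7)$.

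The step I expect to be the main obstacle is the fibre analysis of $\varpi_2$: in contrast with the degree-$3$ case, where the fibre is connected and $\III$ is connected for free, here the fibre carries an extra factor $\ZZ/2$ coming from the covering involution of the double plane (the Geiser involution of $Y_\bp$), so $\III$ is only known a priori to have at most two components, and it is precisely this subtlety that was overlooked in the proof of \cite[Theorem~3.1]{Shimada2022}. Discarding the spurious second component is exactly what forces one to produce an honest reflection in the monodromy — not merely a product of reflections — and the Picard--Lefschetz argument supplies one. The same mechanism will recur for del Pezzo surfaces of degree $1$ in Section~\ref{subsec:bianticanonical}, with $E_7$ replaced by $E_8$ and the Geiser involution by the Bertini involution.
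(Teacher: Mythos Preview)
Your proof is correct and follows the same two-step strategy as the paper: first bound $[W(E_7):\Gamma]\le 2$ by analysing the fibres of $\varpi_2$ (each isomorphism of planes lifts in exactly two ways to an isomorphism of double covers), then exclude index~$2$ by producing a Picard--Lefschetz reflection of determinant $-1$ from a one-nodal degeneration. The only divergence is in the closing group-theoretic step. The paper argues by contradiction using the \emph{simplicity} of $\ker(\det\colon W(E_7)\to\{\pm1\})$, citing the ATLAS: if $\Gamma$ had index~$2$ and contained an element of determinant $-1$, then $\Gamma\cap\ker(\det)$ would be a proper normal subgroup of the simple group. You instead observe that, since $E_7$ is simply laced, all reflections are conjugate and generate $W(E_7)$, so $W(E_7)^{\mathrm{ab}}\cong\ZZ/2\ZZ$ and $\ker(\det)$ is the \emph{unique} index-$2$ subgroup; hence any index-$2$ candidate for $\Gamma$ must equal $\ker(\det)$, which is immediately ruled out by $s_\delta\in\Gamma$. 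Your route is more elementary (no appeal to the classification of simple groups) and, as you note, transplants verbatim to $E_8$, giving a shorter alternative to the paper's argument in Proposition~\ref{prop:WE8} as well.
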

\begin{proof}
Let $\bp$ be a point of $\PPP_7$,
and let $Y_{\bp} \to {\sfP}_{\bp}^2$ 
be the anti-canonical model of $Y_{\bp}$.
Let $\sfB_{\bp}\subset {\sfP}_{\bp}^2$ be the branch curve of $Y_{\bp} \to {\sfP}_{\bp}^2$.
The fiber of $\varpi_2\colon \III\to \PPP_7$ 
over $\bp$ consists 
of pairs $(u, g)$,
where $u$ is a point of $\UUU$
and $g$ is an isomorphism from $X_u$ to $Y_{\bp}$.
An isomorphism $g$ from $X_u$ to $Y_{\bp}$
induces an isomorphism
\[
\bar{g}\;\colon \; \PP^2 \isom  {\sfP}_{\bp}^2.
\]
Conversely,
suppose that an isomorphism $\gamma\colon \PP^2\isom {\sfP}_{\bp}^2$ is given.
Let $u\in \UUU$ be the point such that $B_{u}=\gamma\inv(\sfB_{\bp})$.
Then $\gamma$ admits exactly \emph{two}  liftings
\[
g_1\;\colon \;  X_u \isom Y_{\bp},
\quad
g_2\;\colon \;  X_u \isom Y_{\bp},
\]
which differ by the deck-transformation of $X_u$ over  $\PP^2$.
Since $\PGL(3, \CC)$ is smooth and irreducible,
we see that the fiber of $\varpi_2\colon \III\to \PPP_7$ 
over $\bp$ has at most two connected components.
Therefore
$\III$  has at most two connected components,
and so does $\LLn{7}$.
Consequently,
the index $[W: \Gamma]$ of the image
\[
\Gamma:=\Image(\mon)
\]  
of $\mon$ in $W:=W(R(X_b))$  is at most $2$.
\par
We assume 
\begin{equation}\label{eq:absurdum}
[W: \Gamma]=2, 
\end{equation}
and derive a contradiction.
Note  that the Weyl group $W$ of type $E_7$ contains a simple group $G$  
as the kernel of $\det \colon W \to \{\pm 1\}$.
See~\cite[page 46]{ATLAS1985}.
We show in the next paragraph  that  $\Gamma$  contains an element of determinant $-1$.
By assumption~\eqref{eq:absurdum}, we see that 
$G\cap \Gamma$ is a normal subgroup of $G$ with index $2$,
which contradicts the simplicity of $G$.
\par
Let $\HHH\subset |\OOO_{\PP^2}(4)|\cong \PP^{14}$ 
be the hypersurface that parameterizes singular quartic curves.
Then $\HHH$ is irreducible. 
Let $q$ be a general point of $\HHH$.
Then the corresponding quartic curve $B_q\subset \PP^2$ 
has an  ordinary node as its only singularity, 
and hence the double plane $X_q\to \PP^2$ branching along $B_q$ has 
an ordinary double point as its only singularity.
We choose a sufficiently small closed disc $D\subset |\OOO_{\PP^2}(4)|$ 
intersecting $\HHH$ at $q$ transversely,
and let
$\gamma\colon[0,1]\to \UUU$ be a loop 
that goes from the base point $b$ to a point $q\sprime\in \partial D$ along a path $\tau$,
makes a round trip along $\partial D$ in  positive-direction,
and retraces  the  path $\tau$ backwards to $b$.
The monodromy action on $H^2(X_b)$ by   $[\gamma]\in \pione(\UUU, b)$ is calculated by
the \emph{local Picard--Lefschetz formula}.
(See, for example, \cite{Lamotke1981}.)
We have  a \emph{vanishing cycle} $v\in H^2(X_b)$
corresponding to the ordinary double point of $X_q$,
which satisfies $\intf{ \alpha_b, v} =0$ and $\intf{v, v} =-2$,
and the monodromy  on $H^2(X_b)\cong \Pic(X_b)$ by $[\gamma]$ is the reflection $x\mapsto x+\intf{v, x}v$
with respect to $v$.
Hence we have $\det(\mon([\gamma]))=-1$. 
Note that the identification $\OG(\Pic(X_b), \alpha_b)\cong W(R(X_b))$ 
preserves the determinant.
Therefore $\Gamma$  contains an element of determinant $-1$.
\end{proof}
\subsection{Family of bi-anti-canonical models of del Pezzo surfaces of degree
\texorpdfstring{$1$}{one}}\label{subsec:bianticanonical}
Let $X$ be a del Pezzo surface of degree $d=1$.
Then 
the complete linear system $|2\alpha_X| $
of bi-anti-canonical divisors of $X$ gives rise to a double covering
\[
X\to Q\subset\PP^3
\]
of a singular quadric surface $Q$ of rank $3$ (a quadric cone)
that branches along $B\cup\{V\}$,
where $B$ is a smooth member of $|\OOO_{Q}(3)|$ and $V\in Q$ is the vertex.
Conversely, 
for a quadric cone $Q$ with the vertex $V\in Q$ and 
a smooth member $B$ of  $|\OOO_{Q}(3)|$,
the double cover $X\to Q$ branching  along $B\cup\{V\}$
is the bi-anti-canonical model of a del Pezzo surface $X$ of degree $1$.
\par
We fix 
a quadric cone $Q$ with the vertex $V\in Q$.
Let $\UUU$ denote the Zariski open subset of 
$|\OOO_{Q} (3)|\cong \PP^{15}$
that parameterizes all smooth members.
For $u\in \UUU$,
we denote by  $B_u\subset Q$  the corresponding curve.
We consider the family 
 $\XXX\to \UUU$ 
  of del Pezzo surfaces 
of degree $1$
such that
 $\XXX$ is the double cover of $Q\times \UUU$ with  the  projection  $\XXX\to \UUU$ 
 whose fiber  $X_u$ over $u\in \UUU$
 is the double cover $X_u\to Q$
branching along $B_u\cup \{V\}$.
\begin{proposition}\label{prop:WE8}
For the  family 
 $ \XXX\to \UUU$ 
 of bi-anti-canonical models of del Pezzo surfaces of degree $1$,
 the monodromy homomorphism $\mon$
 is surjective onto the Weyl group $W(R(X_b))$ 
 of type $E_8$.
\end{proposition}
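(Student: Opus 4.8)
The plan is to mimic the strategy of Propositions~\ref{prop:WE6} and~\ref{prop:WE7}: use Corollary~\ref{cor:index}, which reduces surjectivity to counting the connected components of $\LLn{8}$, equivalently (via $\varpi_1^{\LLL}$) the connected components of the incidence variety $\III$, which we probe through the second projection $\varpi_2\colon\III\to\PPP_8$. First I would analyze the fiber of $\varpi_2$ over a point $\bp\in\PPP_8$. For a del Pezzo surface $Y_{\bp}$ of degree $1$, the bi-anti-canonical map realizes $Y_{\bp}$ as a double cover of a quadric cone $\sfQ_{\bp}$ branched along $\sfB_{\bp}\cup\{\sfV_{\bp}\}$; an isomorphism $g\colon X_u\isom Y_{\bp}$ induces an isomorphism $\bar g\colon Q\isom\sfQ_{\bp}$ of quadric cones carrying $B_u$ to $\sfB_{\bp}$ and $V$ to $\sfV_{\bp}$. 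Conversely, given an isomorphism $\gamma\colon Q\isom\sfQ_{\bp}$ (necessarily $\gamma(V)=\sfV_{\bp}$, since the vertex is the unique singular point), the point $u\in\UUU$ with $B_u=\gamma\inv(\sfB_{\bp})$ has exactly \emph{two} liftings $g_1,g_2\colon X_u\isom Y_{\bp}$ differing by the deck transformation of $X_u\to Q$. Since the group of automorphisms of a quadric cone is connected (it is $\PGL(2,\CC)\ltimes\CC^{*}$ up to the obvious description, certainly smooth and irreducible), the fiber of $\varpi_2$ over $\bp$ has at most two connected components. Hence $\III$, and therefore $\LLn{8}$, has at most two connected components, so the image $\Gamma:=\Image(\mon)$ has index at most $2$ in $W:=W(R(X_b))=W(\EE_8)$.

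It remains to exclude $[W:\Gamma]=2$. Here the group theory of $W(E_8)$ differs from the $E_7$ case: $W(E_8)$ is \emph{not} almost simple in the same way, since $W(E_8)/\{\pm\id\}$ is the simple group $\mathrm{O}_8^{+}(2).2$'s relevant quotient — more precisely, $W(E_8)$ has $\{\pm\id\}$ as center and $W(E_8)/\{\pm\id\}\cong \mathrm{GO}_8^{+}(2)$, whose commutator subgroup is simple of index $2$. So the index-$2$ subgroups of $W(E_8)$ must be identified: there are exactly two, namely $\ker(\det)$ and the preimage of the simple group under the reduction map, together with possibly a third (the product structure); I would pin these down by a direct check (or by {\tt GAP}, as the paper permits) and show that $\Gamma$ is contained in none of them. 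As in the $E_7$ argument, one index-$2$ subgroup is $\ker(\det\colon W\to\{\pm1\})$, and I would rule this out exactly as in Proposition~\ref{prop:WE7}: take a general point $q$ of the discriminant hypersurface $\HHH\subset|\OOO_Q(3)|$, so that $B_q$ acquires a single node and $X_q$ a single ordinary double point, and the Picard--Lefschetz formula gives a monodromy loop acting as a reflection in a $(-2)$-vector, hence of determinant $-1$; since $\OG(\Pic(X_b),\alpha_b)\cong W(R(X_b))$ preserves determinants, $\Gamma\not\subseteq\ker(\det)$.

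The main obstacle — and the genuinely new input compared with the $E_7$ case — is ruling out the \emph{other} index-$2$ subgroup(s) of $W(E_8)$, because $\ker(\det)$ is no longer the unique one. The clean way to handle this is to exhibit, via Picard--Lefschetz, \emph{two} independent reflections whose vanishing cycles $v_1,v_2\in\Delta(\EE_8)$ are non-orthogonal (say $\intf{v_1,v_2}=1$), coming from two different degenerations of $B_u$ (or from monodromy along two suitable loops): the subgroup they generate is a dihedral group of order $6$ containing elements outside any fixed index-$2$ subgroup unless both reflections lie in it — and one checks that not all reflections can lie in a common index-$2$ subgroup of $W(E_8)$ other than $\ker(\det)$, since reflections already generate $W(E_8)$. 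Concretely, it suffices to show $\Gamma$ contains at least one reflection (giving $\Gamma\not\subseteq\ker(\det)$) and that $\Gamma$ is not contained in the reduction-map preimage of the simple subgroup; the latter follows once we know $\Gamma$ has order greater than $|W(E_8)|/2$ — but that is what we are trying to prove. To break the circularity I would instead argue abstractly: any index-$2$ subgroup of $W(E_8)$ corresponds to a surjection $W(E_8)\to\ZZ/2$, and $\mathrm{Hom}(W(E_8),\ZZ/2)=\mathrm{Hom}(W(E_8)^{\mathrm{ab}},\ZZ/2)$; computing $W(E_8)^{\mathrm{ab}}\cong\ZZ/2$ (generated by $\det$) shows there is a \emph{unique} index-$2$ subgroup, namely $\ker(\det)$. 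This abelianization computation — which I would either cite from standard Coxeter-group theory (all reflections are conjugate in $W(E_8)$, forcing $W^{\mathrm{ab}}=\ZZ/2$) or verify in {\tt GAP} — is the crux; once it is in hand, the $E_7$-style Picard--Lefschetz argument finishes the proof, and the assumption $[W:\Gamma]=2$ is contradicted, so $\Gamma=W(R(X_b))=W(\EE_8)$ and $\mon$ is surjective.
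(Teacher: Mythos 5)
Your proposal is correct, and its first half (bounding the index of $\Gamma:=\Image(\mon)$ in $W:=W(R(X_b))$ by $2$ via Corollary~\ref{cor:index}, the analysis of the fibers of $\varpi_2$ over $\PPP_8$, and the exactly two liftings of an isomorphism between the quadric cones) is essentially identical to the paper's argument. Where you genuinely diverge is in excluding $[W:\Gamma]=2$. The paper never invokes uniqueness of the index-$2$ subgroup: it uses the structure $W=2.G.2$ with $G$ simple, the Picard--Lefschetz reflection $s_v\in\Gamma$ to see that $\Gamma\cap\Ker(\det)$ has index $2$ in $\Ker(\det)$, deduces $-\id\notin\Gamma$ from the simplicity of $G$, concludes that $\Gamma$ maps isomorphically onto $W/\{\pm\id\}$, which acts transitively on $\Delta/\{\pm\id\}$, and then conjugates $s_v$ inside $\Gamma$ to obtain every reflection, forcing $\Gamma=W$ and a contradiction. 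You instead observe that the abelianization of $W(\EE_8)$ is $\ZZ/2\ZZ$ (the $E_8$ diagram is connected with all bonds of order $3$, so all reflections are conjugate and $\det$ generates the character group), hence $\Ker(\det)$ is the \emph{unique} index-$2$ subgroup, and the single Picard--Lefschetz reflection already rules it out; this is a legitimate and in fact shorter route, which makes the $E_7$ and $E_8$ cases uniform, whereas the paper's route trades the abelianization fact for the ATLAS structure $2.G.2$ and transitivity on $\barDelta$. Two small blemishes you should fix: the speculative middle of your final paragraph (the guess that there might be ``exactly two \dots possibly a third'' index-$2$ subgroups, and the dihedral-group detour) is partly wrong and entirely superseded by the abelianization argument, so delete it; and for the Picard--Lefschetz input do not lean on irreducibility of the discriminant in $|\OOO_{Q}(3)|$ (the complement of $\UUU$ also contains members through the vertex $V$) --- as in the paper, it suffices to exhibit one member $B_q$ avoiding $V$ whose only singularity is a node and to take a small loop around the discriminant at that point.
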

\begin{proof}
The first half of the proof is almost the same as that of Proposition~\ref{prop:WE7}.
Let $\bp$ be a point of $\PPP_8$, 
and let $Y_{\bp} \to {\sfQ}_{\bp}$ 
be the bi-anti-canonical model of $Y_{\bp}$.
Let $\sfB_{\bp}\subset {\sfQ}_{\bp}$ be the curve component of the branch locus  of $Y_{\bp} \to {\sfQ}_{\bp}$.
The fiber of $\varpi_2\colon \III\to \PPP_8$ 
over $\bp$ consists 
of pairs $(u, g)$,
where $u$ is a point of $\UUU$
and $g$ is an isomorphism from $X_u$ to $Y_{\bp}$.
An isomorphism $g$ from $X_u$ to $Y_{\bp}$
induces an isomorphism $\bar{g}\colon Q \isom {\sfQ}_{\bp}$.
Conversely,
suppose that an isomorphism 
$\gamma\colon Q \isom {\sfQ}_{\bp}$ is given.
Let $u\in \UUU$ be the point such that $B_{u}=\gamma\inv(\sfB_{\bp})$.
Then $\gamma$ lifts to exactly \emph{two} isomorphisms 
from $X_u$ to $Y_{\bp}$.
Since the variety 
of isomorphisms from $Q $ to ${\sfQ}_{\bp}$
is smooth and irreducible, 
 the fiber of $\varpi_2$ 
over $\bp$ has at most two connected components.
Therefore  $\III$  has at most two connected components,
and hence 
the index $[W: \Gamma]$ of the image
$\Gamma:=\Image(\mon)$ of the monodromy $\mon$ 
in $W:=W(R(X_b))$  is at most $2$.
We assume 
$[W: \Gamma]=2$, 
and derive a contradiction.
\par
Note  that the Weyl group $W$ of type $E_8$ has the  structure $2.G.2$,
where 
\[
2.G=\Ker\; (\det \colon W\to \{\pm 1\}),
\quad 
G.2=W/\{\pm \id\}, 
\]
and $G$ is a simple group. 
See~\cite[page 85]{ATLAS1985}.
By the assumption $[W: \Gamma]=2$,
we have $|\Gamma|=|2.G|=|G.2|$.
Let $\Delta$ be the set of $(-2)$-vectors 
in the root lattice $R(X_b)$. 
For $r\in \Delta$,
let $s_r\in W$ denote the reflection with respect to $r$.
\par
There exists a member $B_q$  of $|\OOO_{Q}(3)|$
that does not pass through $V$ and  has an ordinary node as its only singularity.
By the argument using the local Picard--Lefschetz formula
as in the proof of Proposition~\ref{prop:WE7},
we see that there exists a vanishing cycle
$v\in \Delta$ corresponding to the ordinary double point of $X_q$
such that $\Gamma$ contains the reflection $s_v$.
Since $\det s_v=-1$,
we see that $\Gamma\cap (2.G)$ is of index $2$ in $2.G$.
If $-\id \in \Gamma$, then $-\id\in \Gamma\cap (2.G)$ 
and hence $(\Gamma\cap (2.G))/\{\pm\id\}$ would be a  subgroup of $(2.G)/\{\pm\id\}=G$ with index $2$.
Since $G$ is simple, we have  $-\id \notin \Gamma$.
Hence $\Gamma$ is mapped isomorphically  onto $G.2=W/\{\pm \id\}$,
which acts on $\Delta/\{\pm \id\}$ transitively.
Since $s_v\in \Gamma$ and $g\inv\cdot s_v\cdot g=s_{v^g}=s_{-v^g}$, 
we see that $s_r\in \Gamma$  for any $r\in \Delta$.
Thus we obtain $\Gamma=W$,
which is a contradiction.
\end{proof}
  \section{Lines in a del Pezzo surface of degree one}\label{sec:lines}
We choose a \emph{general}  point $b$ of the parameter space $\UUU$
of the family 
$\XXX\to \UUU$ of bi-anti-canonical models of del Pezzo surfaces of degree $1$ 
treated in Section~\ref{subsec:bianticanonical}.
The purpose of this section is to 
investigate  
the configuration of lines  in the del Pezzo surface  $X_b$.
\par
In Section~\ref{subsec:Bertini}, 
we introduce the notions of 
\emph{Bertini involution} and \emph{tangent plane sections}.
In Section~\ref{subsec:orbitdecomp}, using Proposition~\ref{prop:WE8}, 
we describe the orbit decompositions
of the set of lines in $X_b$ by the monodromy action.
In Section~\ref{subsec:linesinbP2}, we describe the lines in $X_b$ as plane curves on 
the projective plane ${\bfP}^{2}$ obtained by contracting $8$ disjoint lines in $X_b$.
In Section~\ref{subsec:general}, 
we confirm that 
the union of lines in $X_b$ has only ordinary double points as its singularities.
\subsection{Bertini involution}\label{subsec:Bertini}
We have an orthogonal direct-sum decomposition 
\[
\Pic(X_b)=\ZZ \alpha_b \oplus R(X_b),
\]
where $\alpha_b$ is the anti-canonical class of $X_b$.
The orthogonal projection from $\Pic(X_b)$ to $R(X_b)$ induces a bijection 
\begin{equation}\label{eq:LX}
L(X_b)\cong  \Delta(R(X_b))
\end{equation}
between  the set $L(X_b)$ of lines in $X_b$ and  the set $\Delta(R(X_b))$
of $(-2)$-vectors of the root lattice $R(X_b)$ of type $E_8$.
For a line $l\in L(X_b)$, we denote by 
\[
[l]_R:=[l]-\alpha_b \;\in\; \Delta(R(X_b))
\]
the corresponding $(-2)$-vector. 
Let 
\[
\varphi\colon X_b\to Q
\] 
be the bi-anti-canonical model of $X_b$,
where $Q\subset \PP^3$ is a   quadric cone with the vertex $V\in Q$,
and let $B_b\subset Q$ be the curve component of  the branch locus of 
$\varphi$.
Then  $B_b$ is a smooth member of $|\OOO_{Q}(3)|$. 
The deck-transformation of  $\varphi$
is called the \emph{Bertini involution},  and is denoted by
\[
\involB\colon X_b\isom X_b.
\]
We call a pair $\{l, l\sprime\}$ of lines in $X_b$ an \emph{$\involB$-pair}
if $l\sprime=\involB(l)$,
and say that $l\sprime=\involB(l)$ is the \emph{$\involB$-partner of $l$}.
It is easy to see that,
for lines $l, l\sprime$ in $X_b$,  the following are equivalent:
(i) the pair $\{l, l\sprime\}$ is an $\involB$-pair, 
(ii)~$\intf{l, l\sprime}=3$, 
(iii)~$[l]+[l\sprime]=2\alpha_b$, and
(iv)~$[l]_R+[l\sprime]_R=0$.
\begin{definition}\label{def:tps}
A plane section $H\cap Q$ of $Q$,
where $H$ is a linear plane in $\PP^3$,
is called a \emph{tangent plane section for $B_b$}
if $H$ does not pass through the vertex $V$ 
and the local intersection number at each intersection point 
of $H$ and $B_b$ is even.
We put
\[
S(B_b):=\textrm{the set of 
tangent plane sections  for $B_b$}.
\]
\end{definition}
The image $\varphi(l)$ of a line $l\subset X_b$
by $\varphi$ is a  tangent plane section for $B_b$.
Conversely, the pullback by $\varphi$ of a tangent plane section 
is the union of a line and its $\involB$-partner.
Hence we have natural identifications 
\begin{equation}\label{eq:barLX}
\barL(X_b):=L(X_b)/\angs{\involB}\;\;\cong\;\;  
\barDelta(R(X_b)):=\Delta(R(X_b))/\{\pm \id\} \;\;\cong\;\; S(B_b).
\end{equation}
In particular,
there exist
exactly $120$ tangent plane sections for $B_b$. 
\begin{remark}\label{rem:thetacharacteristics}
The smooth $(2,3)$-complete intersection  $B_b\subset \PP^3$
is the canonical model of a genus $4$ curve with a vanishing theta constant, and 
the tangent plane sections for $B_b$ are in one-to-one correspondence 
with the odd theta-characteristics of $B_b$.
See Chapter IV and Appendix~B of~\cite{ACGH1985}.
\end{remark}
%
%
%
\subsection{Orbit decomposition by the monodromy}\label{subsec:orbitdecomp}
By Proposition~\ref{prop:WE8},  we can compute 
the monodromy actions of $\pione(\UUU, b)$
on the sets in~\eqref{eq:LX} and~\eqref{eq:barLX}
explicitly.
We describe the orbit decompositions of 
$L(X_b)^{\{k\}}$ and $\barL(X_b)^{\{k\}}$ 
under these monodromy actions.
%
\subsubsection{The action on  $L(X_b)^{\{k\}}$}
The monodromy action on the set $L(X_b)^{\{k\}}$ of $k$-element subsets of $L(X_b)$ 
for small $k$ is as follows.
The numbers of orbits are given as follows:
%
%
\begin{equation*}
\begin{array}{c|ccccccc}
k & 1 & 2 & 3 & 4 & 5 & 6 &7  \\
\hline 
 & 1& 4 & 12 & 62 & 378 &  3557 & 45282\rlap{\;.}
\end{array}
\end{equation*}
%
%
%
\begin{itemize}[itemsep=5pt]
\item The action on $L(X_b)^{\{1\}}=L(X_b)$  is transitive.
\item
The  action on $L(X_b)^{\{2\}}$  has four orbits.
For an orbit $o\subset L(X_b)^{\{2\}}$,
let $m(o)$ denote the intersection number $\intf{l_1, l_2}$ of lines,   
where  $\{l_1, l_2\}\in o$.
Then the four orbits are distinguished  by $m(o)$ as follows:
\[
\begin{array}{c|cccc}
m(o) & 0 & 1 & 2 & 3 \\
\hline 
|o| & 6720 & 15120 & 6720 & 120\mystruth{10pt}
\end{array}.
\]
\item
The  action on $L(X_b)^{\{3\}}$  has $12$ orbits.
For an orbit $o\subset L(X_b)^{\{3\}}$,
let $t(o)$ denote 
the non-decreasing sequence of the intersection numbers $\intf{l_i, l_j}$ 
for $1\le i<j\le 3$,
where  $\{l_1, l_2, l_3\}\in o$.
Then the $12$ orbits are described as follows:
\begin{equation*}\label{eq:ots}
\begin{array}{ll}
t(o) & |o| \\
\hline 
 {[} 0, 0, 0 {]}& 60480\\
 {[} 0, 0, 1 {]}& 181440\\ 
 {[} 0, 0, 2 {]}& 6720\\
 {[} 0, 1, 1 {]}& 483840\\
 {[} 0, 1, 2 {]}& 362880\\
 {[} 0, 2, 2 {]},& 181440\\

\end{array}
\qquad
\begin{array}{ll}
t(o) & |o| \\
\hline
 {[} 0, 2, 3 {]}&13440 \\
{[} 1, 1, 1 {]}& 302400 \\
{[} 1, 1, 2 {]}& 483840 \\
{[} 1, 1, 3 {]}&15120 \\
{[} 1, 2, 2 {]}& 181440 \\
{[} 2, 2, 2 {]}& 2240 \;\;\llap{.}\\
\end{array}
\end{equation*}
\end{itemize}
Let $\LLL\spset{k}\to \UUU$ be the \'etale covering whose fiber over $u\in \UUU$
is $L(X_u)\spset{k}$.
\begin{corollary}\label{cor:irred}
{\rm (1)}
The space  $\LLL\spset{2}$ consists of  exactly $4$ irreducible components
$\LLL\spset{2}_0, \dots, \LLL\spset{2}_3$, where 
 $\LLL^{\{2\}}_m\to \UUU$ be the family of 
pairs $\{l_1, l_2\}$ of lines in $X_u$ 
such that $\intf{l_1, l_2}=m$.  
{\rm (2)}
The space $\LLL\spset{3}$ consists of exactly $12$ irreducible components
$\LLL\spset{3}_t$, where 
 $t=[t_1, t_2, t_3]$ runs through the list
\begin{equation}\label{eq:tenlist}
\begin{array}{l}
 {[}0,0,0{]},\;\;[0,0,1],\;\;[0,0,2],\;\;[0,1,1],\;\;[0,1,2],\;\; [0,2,2],\;\; [0,2,3], \\
 {[}1,1,1{]},\;\; [1,1,2],\;\; [1,1,3],\;\; [1,2,2],\;\; [2,2,2].
\end{array}
\end{equation}
The \'etale covering  $\LLL^{\{3\}}_t\to \UUU$ is the family of 
triples $\{l_1, l_2, l_3\}$ of lines in $X_u$
such that $[\intf{l_1, l_2}, \intf{l_2, l_3}, \intf{l_1, l_3}]$ is equal to  $t$ up to order.
\qed
\end{corollary}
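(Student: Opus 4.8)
The plan is to deduce both statements from Corollary~\ref{cor:index} together with Proposition~\ref{prop:WE8}, exactly as the corollary's placement suggests. By Proposition~\ref{prop:WE8}, the monodromy homomorphism $\mon\colon\pione(\UUU,b)\to W(R(X_b))=W(\EE_8)$ is surjective. Hence the monodromy action of $\pione(\UUU,b)$ on $L(X_b)\spset{k}$ factors through a surjection onto the $W(\EE_8)$-action on $L(X_b)\spset{k}$, and the two actions have the same orbits. Since the connected components of the \'etale covering $\LLL\spset{k}\to\UUU$ are in bijection with the orbits of the monodromy action on the fiber $L(X_b)\spset{k}$, it follows that the components of $\LLL\spset{k}$ correspond bijectively to the $W(\EE_8)$-orbits in $L(X_b)\spset{k}$. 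Each such component is moreover irreducible: $\LLL\spset{k}\to\UUU$ is \'etale and $\UUU$ is smooth and irreducible, so every connected component of $\LLL\spset{k}$ is a smooth irreducible variety. This reduces the problem to describing the $W(\EE_8)$-orbits on $L(X_b)\spset{2}$ and $L(X_b)\spset{3}$.

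For part (1): under the identification~\eqref{eq:LX}, $L(X_b)\cong\Delta(\EE_8)$, the intersection pairing $\intf{l_1,l_2}$ of two lines is determined by the pairing $\intf{[l_1]_R,[l_2]_R}$ of the corresponding roots (one checks $\intf{l_1,l_2}=\intf{[l_1]_R,[l_2]_R}+1$, using $\intf{\alpha_b,\alpha_b}=1$). So $m(o)=\intf{l_1,l_2}$ is a $W(\EE_8)$-invariant of the pair, taking the value $0,1,2$ or $3$ according as $\intf{[l_1]_R,[l_2]_R}=-1,0,1$ or $2$ (the value $-2$ would mean $[l_1]_R=[l_2]_R$, i.e.\ $l_1=l_2$). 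That $W(\EE_8)$ acts transitively on pairs of roots with each fixed value of the pairing is a standard fact about the root system $E_8$ (all roots are conjugate, and the stabilizer of a root acts transitively on the roots at a given angle); I would cite the classical theory of root systems or verify it by the brute-force {\tt GAP} computation referenced in the paper. The orbit sizes $6720,15120,6720,120$ are then a direct count: $|\Delta(\EE_8)|\cdot(\text{number of roots at the given pairing with a fixed root})/2$, and one notes $6720=240\cdot56/2$, $15120=240\cdot126/2$, $6720=240\cdot56/2$, $120=240\cdot1/2$, which also accounts for all $\binom{240}{2}=28680$ pairs.

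For part (2): the $W(\EE_8)$-invariant $t(o)$, the multiset of the three pairwise pairings, is obviously constant on each orbit, giving at most as many orbits as there are realizable triples $[t_1,t_2,t_3]$; the claim is that the twelve entries in the list~\eqref{eq:tenlist} are exactly the realizable ones and that each of them is a single orbit. The main obstacle here is that, unlike the rank-$2$ case, the multiset of pairings does \emph{not} a priori determine the $W(\EE_8)$-orbit of a triple of roots, so one must check that no further invariant is needed — there could in principle be two non-conjugate triples with the same $t$. This is precisely where the paper's brute-force approach enters: I would have {\tt GAP} enumerate the $W(\EE_8)$-orbits on $\Delta(\EE_8)\spset{3}$ (equivalently, compute orbit representatives and stabilizer orders), confirm there are exactly $12$, read off the $t$-value and the size of each, and match against~\eqref{eq:tenlist}; the orbit sizes listed sum to $\binom{240}{3}=2278280$, which serves as a consistency check. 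One should also rule out configurations like $[2,2,3]$ or $[3,3,\ast]$: e.g.\ $\intf{l_i,l_j}=3$ forces $\{l_i,l_j\}$ to be a Bertini pair with $[l_i]_R=-[l_j]_R$, whence the third line's pairings with $l_i$ and $l_j$ are negatives of each other, so such a triple has $t=[\,-a+1,\,0\,\text{-type},\,a+1]$ with the middle entry forced — this explains why $[0,2,3]$ and $[1,1,3]$ appear but $[2,2,3]$, $[0,3,3]$ etc.\ do not. Finally, the identification of each $\LLL\spset{3}_t\to\UUU$ as the family of triples with the prescribed pairing multiset is immediate from the correspondence between components and orbits established in the first paragraph.
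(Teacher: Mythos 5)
Your proposal is correct and takes essentially the same route as the paper: surjectivity of the monodromy (Proposition~\ref{prop:WE8}) identifies the connected (hence, being \'etale over the smooth irreducible $\UUU$, irreducible) components of $\LLL\spset{2}$ and $\LLL\spset{3}$ with the $W(E_8)$-orbits on pairs and triples of lines, and those orbits are enumerated exactly as the paper does, by the computer computation (your hand arguments for $k=2$ and for excluding triples like $[2,2,3]$ are correct supplements). The only slip is arithmetic in your consistency check: $\binom{240}{3}=2275280$, not $2278280$, and the paper's twelve orbit sizes do indeed sum to $2275280$, so the check passes.
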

\subsubsection{The action on  $\barL(X_b)^{\{k\}}$}
The  action on the set  $\barL(X_b)^{\{k\}}$
is identified with 
the action of $\barW$
on $\barDelta^{\{k\}}$ defined in Section~\ref{subsec:Nk}.
Therefore 
the  numbers of orbits are  $N(k)$ and,  for small $k$,
they  are given in
Table~\ref{eq:Nks}.
\begin{itemize}[itemsep=5pt]
\item The  action on the set $\barL(X_b)^{\{1\}}=\barL(X_b)$ of $\involB$-pairs is transitive.
\item
%
%
The  action decomposes  $\barL(X_b)^{\{2\}}$ into two orbits 
of size $3360 $ and  $3780$.
These two orbits are distinguished by 
Figure~\ref{fig:orbitsbarLX2},
where a line is denoted by a circle 
\begin{tikzpicture}[x=1mm, y=1mm]
\draw[fill=white, draw=black,  thick] (0,0) circle (1);
\end{tikzpicture},
\newcommand{\iBpair}{\begin{tikzpicture}[x=1mm, y=1mm]
\draw [line width=2pt] (0,0)--(6,0);
\draw[fill=white, draw=black,  thick] (0,0) circle (1);
\draw[fill=white, draw=black,  thick] (6,0) circle (1);
\end{tikzpicture}}
an $\involB$-pair  is denoted by 
$\iBpair$, 
and the intersection number of distinct two lines is given by 
the number of line-segments connecting the corresponding circles.
\item
The action decomposes  $\barL(X_b)^{\{3\}}$ into five orbits.
These  orbits are depicted in
Figure~\ref{fig:orbitsbarLX3}.
\end{itemize}
%
%
\begin{figure}
\begin{tikzpicture}[x=1.1mm, y=1.1mm]
\begin{scope}[shift={(0,0)}]
\draw[double, line width=.5pt, double distance=1.5pt]  (0,0)--(15,0);
\draw[double, line width=.5pt, double distance=1.5pt]  (0,5)--(15,5);
\draw [line width=2pt] (0,0)--(0,5);
\draw [line width=2pt] (15,0)--(15,5);
\draw[fill=white, draw=black,  thick] (0,5) circle (1);
\draw[fill=white, draw=black,  thick] (0,0) circle (1);
\draw[fill=white, draw=black,  thick] (15,5) circle (1);
\draw[fill=white, draw=black,  thick] (15,0) circle (1);
\draw (8,-4) node {size $3360$};
\end{scope}
\begin{scope}[shift={(30,0)}]
\draw[line width=.5pt]  (0,0)--(15,5);
\draw[line width=.5pt]  (0,5)--(15,0);
\draw[line width=.5pt]  (0,0)--(15,0);
\draw[line width=.5pt]  (0,5)--(15,5);
\draw [line width=2pt] (0,0)--(0,5);
\draw [line width=2pt] (15,0)--(15,5);
\draw[fill=white, draw=black,  thick] (0,5) circle (1);
\draw[fill=white, draw=black,  thick] (0,0) circle (1);
\draw[fill=white, draw=black,  thick] (15,5) circle (1);
\draw[fill=white, draw=black,  thick] (15,0) circle (1);
\draw (8,-4) node {size $3780$};
\end{scope}
\end{tikzpicture}
\caption{Orbits in $\barL(X_b)^{\{2\}}$}\label{fig:orbitsbarLX2}
\vskip .7cm
\begin{tikzpicture}[x=1.1mm, y=1.1mm]
\pgfmathsetmacro{\s}{30}
\pgfmathsetmacro{\ss}{\s+\s}
\pgfmathsetmacro{\sss}{\ss+\s}
\pgfmathsetmacro{\ssss}{\sss+\s}

\begin{scope}[shift={(0,0)}]
\coordinate (A1) at  (0,5);
\coordinate (A2) at  (0,10);
\coordinate (B1) at  (4.33,-2.5);
\coordinate (B2) at  (8.66, -5);
\coordinate (C1) at  (-4.33, -2.5);
\coordinate (C2) at  (-8.66, -5);
\draw[line width=.5pt] (A1)--(B1);
\draw[line width=.5pt] (A1)--(B2);
\draw[line width=.5pt] (A1)--(C1);
\draw[line width=.5pt] (A1)--(C2);
\draw[line width=.5pt] (A2)--(B1);
\draw[line width=.5pt] (A2)--(B2);
\draw[line width=.5pt] (A2)--(C1);
\draw[line width=.5pt] (A2)--(C2);
\draw[line width=.5pt] (B1)--(C1);
\draw[line width=.5pt] (B1)--(C2);
\draw[line width=.5pt] (B2)--(C1);
\draw[line width=.5pt] (B2)--(C2);
\draw[line width=2pt] (A1)--(A2);
\draw[line width=2pt] (B1)--(B2);
\draw[line width=2pt] (C1)--(C2);
\draw[fill=white, draw=black,  thick] (A1) circle (1);
\draw[fill=white, draw=black,  thick] (A2) circle (1);
\draw[fill=white, draw=black,  thick] (B1) circle (1);
\draw[fill=white, draw=black,  thick] (B2) circle (1);
\draw[fill=white, draw=black,  thick] (C1) circle (1);
\draw[fill=white, draw=black,  thick] (C2) circle (1);
\draw (0,-9) node {size $37800$};
\end{scope}
\begin{scope}[shift={(\s,0)}]
\coordinate (A1) at  (0,5);
\coordinate (A2) at  (0,10);
\coordinate (B1) at  (4.33,-2.5);
\coordinate (B2) at  (8.66, -5);
\coordinate (C1) at  (-4.33, -2.5);
\coordinate (C2) at  (-8.66, -5);
\draw[line width=.5pt] (A1)--(B1);
\draw[line width=.5pt] (A1)--(B2);
\draw[line width=.5pt] (A1)--(C1);
\draw[line width=.5pt] (A1)--(C2);
\draw[line width=.5pt] (A2)--(B1);
\draw[line width=.5pt] (A2)--(B2);
\draw[line width=.5pt] (A2)--(C1);
\draw[line width=.5pt] (A2)--(C2);
\draw[double, line width=.5pt, double distance=1.5pt]   (B1)--(C1);
\draw[double, line width=.5pt, double distance=1.5pt]   (B2)--(C2);
\draw[line width=2pt] (A1)--(A2);
\draw[line width=2pt] (B1)--(B2);
\draw[line width=2pt] (C1)--(C2);
\draw[fill=white, draw=black,  thick] (A1) circle (1);
\draw[fill=white, draw=black,  thick] (A2) circle (1);
\draw[fill=white, draw=black,  thick] (B1) circle (1);
\draw[fill=white, draw=black,  thick] (B2) circle (1);
\draw[fill=white, draw=black,  thick] (C1) circle (1);
\draw[fill=white, draw=black,  thick] (C2) circle (1);
\draw (0,-9) node {size $120960$};
\end{scope}
\begin{scope}[shift={(\ss,0)}]
\coordinate (A1) at  (0,5);
\coordinate (A2) at  (0,10);
\coordinate (B1) at  (4.33,-2.5);
\coordinate (B2) at  (8.66, -5);
\coordinate (C1) at  (-4.33, -2.5);
\coordinate (C2) at  (-8.66, -5);
\draw[line width=.5pt] (A1)--(B1);
\draw[line width=.5pt] (A1)--(B2);
\draw[line width=.5pt] (A2)--(B1);
\draw[line width=.5pt] (A2)--(B2);
\draw[double, line width=.5pt, double distance=1.5pt]   (A1)--(C1);
\draw[double, line width=.5pt, double distance=1.5pt]   (A2)--(C2);
\draw[double, line width=.5pt, double distance=1.5pt]   (B1)--(C1);
\draw[double, line width=.5pt, double distance=1.5pt]   (B2)--(C2);
\draw[line width=2pt] (A1)--(A2);
\draw[line width=2pt] (B1)--(B2);
\draw[line width=2pt] (C1)--(C2);
\draw[fill=white, draw=black,  thick] (A1) circle (1);
\draw[fill=white, draw=black,  thick] (A2) circle (1);
\draw[fill=white, draw=black,  thick] (B1) circle (1);
\draw[fill=white, draw=black,  thick] (B2) circle (1);
\draw[fill=white, draw=black,  thick] (C1) circle (1);
\draw[fill=white, draw=black,  thick] (C2) circle (1);
\draw (0,-9) node {size $90720$};
\end{scope}
\begin{scope}[shift={(0,-27)}]
\coordinate (A1) at  (0,5);
\coordinate (A2) at  (0,10);
\coordinate (B1) at  (4.33,-2.5);
\coordinate (B2) at  (8.66, -5);
\coordinate (C1) at  (-4.33, -2.5);
\coordinate (C2) at  (-8.66, -5);
\draw[double, line width=.5pt, double distance=1.5pt]   (A1)--(B1);
\draw[double, line width=.5pt, double distance=1.5pt]   (A2)--(B2);
\draw[double, line width=.5pt, double distance=1.5pt]   (A1)--(C1);
\draw[double, line width=.5pt, double distance=1.5pt]   (A2)--(C2);
\draw[double, line width=.5pt, double distance=1.5pt]   (B1)--(C1);
\draw[double, line width=.5pt, double distance=1.5pt]   (B2)--(C2);
\draw[line width=2pt] (A1)--(A2);
\draw[line width=2pt] (B1)--(B2);
\draw[line width=2pt] (C1)--(C2);
\draw[fill=white, draw=black,  thick] (A1) circle (1);
\draw[fill=white, draw=black,  thick] (A2) circle (1);
\draw[fill=white, draw=black,  thick] (B1) circle (1);
\draw[fill=white, draw=black,  thick] (B2) circle (1);
\draw[fill=white, draw=black,  thick] (C1) circle (1);
\draw[fill=white, draw=black,  thick] (C2) circle (1);
\draw (0,-9) node {size $1120$};
\end{scope}
\begin{scope}[shift={(\s,-27)}]
\coordinate (A1) at  (0,5);
\coordinate (A2) at  (0,10);
\coordinate (B1) at  (4.33,-2.5);
\coordinate (B2) at  (8.66, -5);
\coordinate (C1) at  (-4.33, -2.5);
\coordinate (C2) at  (-8.66, -5);
\draw[double, line width=.5pt, double distance=1.5pt]   (A1)--(B1);
\draw[double, line width=.5pt, double distance=1.5pt]   (A2)--(B2);
\draw[double, line width=.5pt, double distance=1.5pt]   (A1)--(C1);
\draw[double, line width=.5pt, double distance=1.5pt]   (A2)--(C2);
\draw[double, line width=.5pt, double distance=1.5pt]   (B1)--(C2);
\draw[double, line width=.5pt, double distance=1.5pt]   (B2)--(C1);
\draw[line width=2pt] (A1)--(A2);
\draw[line width=2pt] (B1)--(B2);
\draw[line width=2pt] (C1)--(C2);
\draw[fill=white, draw=black,  thick] (A1) circle (1);
\draw[fill=white, draw=black,  thick] (A2) circle (1);
\draw[fill=white, draw=black,  thick] (B1) circle (1);
\draw[fill=white, draw=black,  thick] (B2) circle (1);
\draw[fill=white, draw=black,  thick] (C1) circle (1);
\draw[fill=white, draw=black,  thick] (C2) circle (1);
\draw (0,-9) node {size $30240$};
\end{scope}
\end{tikzpicture}
\caption{Orbits in $\barL(X_b)^{\{3\}}$}\label{fig:orbitsbarLX3}
\end{figure}
%
%
\subsection{Lines in the birational model \texorpdfstring{${\bfP}^{2}$}{bP2}}\label{subsec:linesinbP2}
We choose  disjoint $8$ lines $l_1, \dots, l_8$ in $X_b$
and consider the contraction 
$\beta\colon X_b\to {\bfP}^{2}$
of these lines.
Let $p_i\in {\bfP}^{2}$ be the point  $\beta(l_i)$ for $i=1, \dots, 8$,
and let $h\in \Pic(X_b)$ be the class of the pullback of a line in ${\bfP}^{2}$.
Since
 \[
 3h=(\alpha_b+l_1+ \dots+l_8),
\]
and we have $\ell+\involB(\ell)=2\alpha_b$, it follows that
\[
\intf{h, \ell}+\intf{h, \involB(\ell)}=6
\]
holds for any line $\ell$.
We investigate the images of the lines $\ell$ by $\beta$.
Calculating the intersection numbers with the exceptional lines $l_1, \dots, l_8$ of $\beta$,
we obtain the following.
(See also~\cite[Theorem~26.2]{Manin1986}.)
In the following, the phrase ``$C$ passes through $p\in {\bfP}^{2}$ \emph{once}"
means that $p$  is a smooth point of $C$. 
\begin{itemize}[itemsep=5pt]
\item There exist exactly $8$ lines $l_i$ with $h$-degree $0$. 
Their $\involB$-partners are of $h$-degree $6$:
the sextic curve  $\beta(\involB(l_i))\subset {\bfP}^{2}$  has a triple  point at $p_i$, 
and double points at the $7$ points in $\{p_1, \dots, \ p_8\}\setminus\{p_i\}$.
\item There exist exactly $28$ lines $l_{ij}$ with $h$-degree $1$,
where $1\le i<j\le 8$.
The line $l_{ij}$ is mapped by $\beta$ to the line  in ${\bfP}^{2}$ 
passing through  $p_i$ and $p_j$.
Their $\involB$-partners are  of   $h$-degree $5$:
the quintic curve  $\beta(\involB(l_{ij}))\subset {\bfP}^{2}$ passes through $p_i$ and $p_j$ once, 
and has  double points at the $6$ points  in 
$\{p_1, \dots, \ p_8\}\setminus\{p_i, p_j\}$.
\item There exist exactly $56$ lines $l_{\overline{ijk}}$ with $h$-degree $2$,
where $1\le i<j<k\le 8$.
The line $l_{\overline{ijk}}$  is  mapped by $\beta$ to the conic   in ${\bfP}^{2}$ 
passing through the five points 
in $\{p_1, \dots, p_8\}\setminus \{p_i, p_j, p_k\}$.
Their $\involB$-partners are  of $h$-degree $4$:
the quartic curve  $\beta(\involB(l_{\overline{ijk}}))$
has  double points at 
$p_i, p_j, p_k$ and 
 passes through the $5$ points  in $\{p_1, \dots, p_8\}\setminus \{p_i, p_j, p_k\}$ once.
\item There exist exactly $56$ lines $l_{i, j}$
with $h$-degree $3$,
where $1\le i, j\le 8$ and $i\ne j$.
We have $\involB(l_{i, j})=l_{j, i}$.
The cubic curve  $\beta(\involB(l_{i, j}))$
  passes through the $6$ points in 
$\{p_1, \dots, \ p_8\}\setminus\{p_i, p_j\}$ once, 
has a double point at $p_i$, and does not pass through $p_j$.
\end{itemize}
%
%
 %
 \begin{table}
 \[
 \begin{array}{ccc}
 \textrm{$h$-degree $\intf{h, \ell}$}  & \textrm{multiplicities $\intf{l_i, \ell}$} & \textrm{number}\\
 \hline
 0 & (-1)^1 0^7 & 8 \mystruth{12pt}\\
1 & 0^6 1^2 & 28 \\
2 &  0^3 1^5 & 56 \\
3 & 0^1 1^6 2^1 & 56 \\
4 & 1^5 2^3 & 56 \\
5 & 1^2 2^6 & 28 \\
6 & 2^7 3^1& 8 
 \end{array}
 \]
 \caption{$240$ curves in \texorpdfstring{${\bfP}^{2}$}{P2}}\label{table:hdegandmults}
 \end{table}
 %
%
 %
\subsection{Union of lines}\label{subsec:general}
In this section, we confirm the following result,
which must be well known,  but of which we could not find a reference.
\begin{proposition}\label{prop:union}
The union of the $240$ lines  in $X_b$ has only ordinary double points 
as its singularities.
\end{proposition}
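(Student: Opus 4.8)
The plan is to reduce the statement to a finite computation that is already accessible given the orbit decompositions in Section~\ref{subsec:orbitdecomp}. A singular point of the union $\bigcup_{\ell\in L(X_b)}\ell$ is a point lying on at least two of the $240$ lines, and it fails to be an ordinary point precisely when either (a) three or more lines pass through it, or (b) two lines are tangent there (i.e.\ meet with local intersection multiplicity $\ge 2$ at that point). Since $X_b$ is a smooth surface and the lines are smooth rational curves, case (b) can only occur if two lines $\ell_1,\ell_2$ with $\ell_1\ne\ell_2$ satisfy $\intf{\ell_1,\ell_2}\ge 2$ and have \emph{all} their intersection concentrated at a single point with multiplicity $\ge 2$; case (a) requires a point on three lines $\ell_1,\ell_2,\ell_3$ with $\intf{\ell_i,\ell_j}\ge 1$ for each pair. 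So the whole statement is controlled by the geometry of pairs and triples of lines, and by Corollary~\ref{cor:irred} there are only $4$ monodromy orbits of pairs and $12$ orbits of triples; it suffices to verify the claim for one representative configuration in each orbit, since being an ordinary singularity is preserved under the monodromy (which acts by homeomorphisms of the surfaces in the family, or at least algebraically in a way that preserves incidence data).

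First I would dispose of tangency between two lines. For a pair $\{\ell_1,\ell_2\}$ with $m:=\intf{\ell_1,\ell_2}$ equal to $0$ there is nothing to check; for $m=1$ the intersection is automatically a single transverse point. The cases $m=2$ and $m=3$ (the $\involB$-pairs) are the only ones that could produce a tangency. Here I would pass to the explicit plane model of Section~\ref{subsec:linesinbP2}: choosing the contraction $\beta\colon X_b\to{\bfP}^2$, each line is one of the $240$ plane curves listed in Table~\ref{table:hdegandmults}, and two lines meet in $X_b$ at a point away from the exceptional locus exactly when the corresponding plane curves meet there, while their intersection at an exceptional point $p_i$ is forced by the multiplicities $\intf{l_i,\ell}$ recorded in that table. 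For a generic choice of the eight points $p_1,\dots,p_8$ (which holds since $b$ is general in $\UUU$), a transversality argument on the plane curves --- a conic and its residual quartic, a line and its residual quintic, etc.\ --- shows that the two members of an $\involB$-pair, or the two members of an $m=2$ pair, meet transversally at each of their common points. Concretely, one checks that the base locus of the relevant pencils consists of reduced points and that no two of the $240$ plane curves can be tangent for generic $p_i$; this is an open dense condition cut out on the parameter space $\PPP_8$, so genericity of $b$ suffices.

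Next I would rule out three concurrent lines, which is the more delicate half. Suppose $\ell_1,\ell_2,\ell_3$ all pass through a common point $P\in X_b$. Then for each pair we have $\intf{\ell_i,\ell_j}\ge 1$, so the triple $\{\ell_1,\ell_2,\ell_3\}$ lies in one of the $12$ orbits of Corollary~\ref{cor:irred}, and in fact in one of the sub-list with all three intersection numbers positive; the orbits $[0,0,0]$, $[0,0,1]$, $[0,0,2]$, $[0,1,1]$, $[0,1,2]$, $[0,2,2]$, $[0,2,3]$ are immediately excluded. For each of the remaining orbits $[1,1,1]$, $[1,1,2]$, $[1,1,3]$, $[1,2,2]$, $[2,2,2]$ I would take one explicit representative (again via the plane model, or directly as $(-2)$-vectors in $R(X_b)\cong E_8$ using the identification \eqref{eq:LX}) and verify by a direct computation --- which is exactly the kind of brute-force {\tt GAP} calculation the paper already relies on, with data in~\cite{WE8compdata} --- that the three lines do \emph{not} have a common point: in the plane model this amounts to checking that three plane curves of the prescribed degrees and prescribed behaviour at the eight generic base points have no common point other than the $p_i$, and that at the $p_i$ they do not all pass through with a common tangent direction. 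Since $b$ is general, any such incidence would have to persist on a dense subset of $\UUU$, hence be forced by the combinatorial type alone, so checking one representative per orbit is enough.

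The main obstacle is the second half: organizing the finitely many triple-orbit checks so that one genuinely verifies non-concurrency rather than merely counting, and in particular handling the orbit $[2,2,2]$ (the smallest orbit, of size $2240$, corresponding to three mutually tangent-looking lines). There the three lines have pairwise intersection number $2$, and one must make sure that the two intersection points of each pair are distinct and that no point is shared by all three --- this is where a careful look at the plane model, or at the configuration of the corresponding six $(-2)$-vectors inside a copy of $A_2^{\oplus?}$ in $E_8$, is needed. I expect this to come down to exhibiting, for a well-chosen representative, three explicit plane curves (say three conics through five common base points, or a mix dictated by Table~\ref{table:hdegandmults}) and observing directly that their pairwise intersections away from the base locus are three pairs of distinct simple points with empty triple intersection; genericity of the $p_i$ then propagates this to all of $\LLn{8}$ over a dense open set, and since $b$ was chosen general, Proposition~\ref{prop:union} follows.
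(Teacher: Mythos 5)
Your proposal is correct and follows essentially the same route as the paper: reduce via Corollary~\ref{cor:irred} to the finitely many monodromy orbits of pairs (only $m=2,3$ need attention) and of triples with all pairwise intersection numbers positive, verify one explicit representative of each in the plane model obtained by contracting eight disjoint lines, and propagate by openness of the ``distinct intersection points / empty triple intersection'' condition on each irreducible component together with the generality of $b$. The paper carries out exactly this verification with an explicit configuration of eight points over $\FF_{19}$, which is the computation you defer to at the end.
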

\begin{proof}
Recall that $X_b$ is a \emph{general} member of the family $\XXX\to \UUU$.
By Corollary~\ref{cor:irred},
it is enough to prove the following.
\begin{itemize}
\item[($m$)] For $m=2$ and $m=3$,
there exist a point $u\in \UUU$
and  lines $\ell_1, \ell_2$ in $X_u$ such that
$\intf{\ell_1, \ell_2}=m$, and that 
$\ell_1$ and $\ell_2$ intersect at distinct $m$ points.
\item[(\,$t$\,)]
For each $t=[t_1, t_2, t_3]$ in the second line of~\eqref{eq:tenlist},
there exist a point $u\in \UUU$
and  lines $\ell_1, \ell_2, \ell_3$ in $X_u$ such that $[\intf{\ell_1, \ell_2}, \intf{\ell_2, \ell_3},  \intf{\ell_1, \ell_3}]$ 
is equal to $t$ up to order,
and that  $\ell_1\cap  \ell_2\cap  \ell_3$  is empty.
\end{itemize}
We find such a del Pezzo surface $X_u$ by choosing $8$ points 
$p_1, \dots, p_8$ on ${\bfP}^{2}$ satisfying  the conditions in Definition~\ref{def:PPPn}.
Let $Y_{\bp}\to {\bfP}^{2}$ be the blowing-up at these points.
The lines in  $Y_{\bp}$ can be calculated 
by the description given in Section~\ref{subsec:linesinbP2},
and we search for lines satisfying
 the conditions  in  ($m$) and ($t$).
It is enough to find an example over a finite field.
 \par
 We give an example over  $\FF_{19}$.
 We choose the following 
$8$ points:
 \[
 \begin{aligned}
&p_1=(0,0),
\quad
p_2=(1,0),
\quad
p_3=(0,1),
\quad
p_4=(1,1),\\
\quad
&p_5=(2,15),
\quad
p_6=(15,4),
\quad
p_7=(11,15),
\quad
p_8=(12, 16),
\end{aligned}
 \]
where we use  affine coordinates of ${\bfP}^{2}$.
It is easy to confirm that these points 
satisfy  the conditions in Definition~\ref{def:PPPn}.
A line $\ell$ in $Y_{\bp}$ is denoted as $[d; \mu_1, \dots, \mu_8]$,
where
$d$ is the $h$-degree and $\mu_i$ is the multiplicity $\intf{l_i, \ell}$ of $\beta(\ell)$ at $p_i$.
We consider the following lines ${\ell}_i$, and calculate the defining equations of $\beta({\ell}_i)$ in ${\bfP}^{2}$:
\[
\begin{array}{lcl}
{\ell}_1 &:=& [0;  -1, 0, 0, 0, 0, 0, 0, 0 ], \\ 
{\ell}_2 &:=& [3;  2, 1, 1, 1, 1, 1, 1, 0 ], \\ 
{\ell}_3 &:=& [6;  3, 2, 2, 2, 2, 2, 2, 2 ], \\  
{\ell}_4 &:=& [1; 1, 1, 0, 0, 0, 0, 0, 0 ], \\ 
{\ell}_5 &:=& [2; 1, 0, 1, 1, 1, 1, 0, 0 ], \\ 
\end{array}
\quad
\begin{array}{lcl}
{\ell}_6 &:=& [3; 2, 0, 1, 1, 1, 1, 1, 1 ], \\ 
{\ell}_7 &:=& [2; 1, 1, 1, 1, 1, 0, 0, 0 ], \\
{\ell}_8 &:=& [6; 2, 3, 2, 2, 2, 2, 2, 2 ], \\
{\ell}_9 &:=& [6; 2, 2, 3, 2, 2, 2, 2, 2 ]. \\
&&
\end{array}
\]
%
Then we confirm that the pair ${\ell}_1, {\ell}_2$ (resp.~the pair  ${\ell}_1, {\ell}_3$) satisfies condition $(m)$ for $m=2$
 (resp.~$m=3$).
Moreover, 
the triple $\tau=\{{\ell}_i, {\ell}_j, {\ell}_k\}$ satisfies condition $(t)$ for $t=[t_1, t_2, t_3]$,
where
\[
\begin{array}{cc}
\tau & t \\
\hline 
{\ell}_1, {\ell}_4, {\ell}_5 & [1,1,1]\\
{\ell}_1, {\ell}_4, {\ell}_6 & [1,1,2]\\
{\ell}_1, {\ell}_3, {\ell}_7 & [1,1,3]
\end{array}
\quad
\begin{array}{cc}
\tau & t \\
\hline 
{\ell}_1, {\ell}_6, {\ell}_9& [1,2,2]\\
{\ell}_1, {\ell}_6, {\ell}_8 & [2,2,2]\rlap{\;.}\\
 & 
\end{array}
\]
%
%
%
The defining equations of $\beta({\ell}_i)$ can be found in~\cite{WE8compdata}.
\end{proof}
%
%
\begin{corollary}\label{cor:general1}
{\rm (1)}
Every tangent plane section $H\cap Q$  for $B_b$ intersects $B_b$ at distinct  three  points.
{\rm (2)} 
Suppose that  $H_1\cap Q$ and $H_2\cap Q$ are distinct  tangent plane sections for $B_b$.
Then  $H_1\cap B_b$ and $ H_2\cap B_b$ are disjoint, 
 and  $H_1\cap H_2\cap Q$ consists of distinct two points.
 {\rm (3)} 
 Suppose that  $H_1\cap Q, H_2\cap Q$, and $H_3\cap Q$ are distinct  tangent plane sections for $B_b$.
Then   $H_1\cap H_2\cap H_3 \cap Q$ is empty.
\qed
\end{corollary}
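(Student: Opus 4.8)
The plan is to pull the three assertions back, via the bi-anti-canonical morphism $\varphi\colon X_b\to Q$, to statements about the configuration of the $240$ lines in $X_b$, and then invoke Proposition~\ref{prop:union}. Recall from Section~\ref{subsec:Bertini} that, since $V\notin H$, a tangent plane section $H\cap Q$ for $B_b$ satisfies $\varphi\inv(H\cap Q)=l\cup\involB(l)$ for a uniquely determined $\involB$-pair $\{l,\involB(l)\}$, that $\varphi(l)=H\cap Q$, and that $\intf{l,\involB(l)}=3$; moreover distinct tangent plane sections give distinct $\involB$-pairs, which have no line in common. Since $V\notin H$, the conic $H\cap Q$ is smooth, and $\varphi$ restricts over it to the double cover $l\cup\involB(l)\to H\cap Q$; as $\involB$ exchanges the two components, each of $l$ and $\involB(l)$ maps \emph{isomorphically} onto $H\cap Q$. (Equivalently, $\deg(\varphi|_l)\cdot\deg_{\PP^3}(H\cap Q)=\intf{l,2\alpha_b}=2$ with $\deg(\varphi|_l)=1$ by symmetry.)

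For (1): suppose some $q\in l\cap\involB(l)$ had $\varphi(q)\notin B_b$. Then $\varphi$ is étale over $\varphi(q)$ with fiber two distinct points, one being $q$; since $\varphi|_l$ and $\varphi|_{\involB(l)}$ are injective, the other point of the fiber lies on neither $l$ nor $\involB(l)$, contradicting $\varphi\inv(H\cap Q)=l\cup\involB(l)$. Hence $\varphi(l\cap\involB(l))\subseteq H\cap Q\cap B_b=H\cap B_b$. By Proposition~\ref{prop:union}, $l$ and $\involB(l)$ meet transversally and no third of the $240$ lines passes through a point of $l\cap\involB(l)$, so $l\cap\involB(l)$ consists of $\intf{l,\involB(l)}=3$ distinct points; as $\varphi|_l$ is injective these have $3$ distinct images in $H\cap B_b$. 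On the other hand $H\cap B_b$ is a length-$6$ scheme (a conic meeting a cubic in the plane $H$) all of whose local intersection multiplicities are even, so it has at most $3$ distinct points. Therefore $H\cap B_b$ is exactly these $3$ distinct points.

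For (2) and (3): write $\varphi\inv(H_i\cap Q)=l_i\cup\involB(l_i)$. Over a point $p\in H_i\cap B_b$ the unique point $q$ of $\varphi\inv(p)$ is a ramification point, hence fixed by $\involB$, and since $q\in l_i\cup\involB(l_i)$ it lies on both $l_i$ and $\involB(l_i)$. Thus a common point of $H_1\cap B_b$ and $H_2\cap B_b$ would give a point of $X_b$ lying on the four distinct lines $l_1,\involB(l_1),l_2,\involB(l_2)$, impossible by Proposition~\ref{prop:union}; so $H_1\cap B_b$ and $H_2\cap B_b$ are disjoint, and the line $\ell:=H_1\cap H_2$ (not contained in $Q$, as $V\notin H_1$) meets $Q$ away from $B_b$. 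Now $(l_1\cup\involB(l_1))\cap(l_2\cup\involB(l_2))$ has no common component and total intersection number $\intf{2\alpha_b,2\alpha_b}=4\alpha_b^{2}=4$; by Proposition~\ref{prop:union} each of its points lies on exactly one line of each pair and is a transverse intersection, so it consists of $4$ distinct points, none of them over $B_b$. These $4$ points map into $\ell\cap Q$, over every point of which $\varphi$ is étale of degree $2$; hence $\ell\cap Q$ cannot be supported at a single point, and consists of $2$ distinct points. This proves (2). For (3), if $P\in H_1\cap H_2\cap H_3\cap Q$, then any point of $\varphi\inv(P)$ lies on $l_i\cup\involB(l_i)$ for $i=1,2,3$, hence on at least $3$ distinct lines (the three $\involB$-pairs being pairwise disjoint), again contradicting Proposition~\ref{prop:union}. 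Hence $H_1\cap H_2\cap H_3\cap Q=\emptyset$.

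The crux is the reduction step in (1), namely that $\varphi$ maps $l\cap\involB(l)$ into $B_b$: this is where the isomorphism $l\isom H\cap Q$ and the identity $\varphi\inv(H\cap Q)=l\cup\involB(l)$ are essential, and it is what converts the purely combinatorial content of Proposition~\ref{prop:union}—transversality of pairs of lines and the absence of concurrent triples—into the geometry of tangent plane sections. Once (1) is set up this way, parts (2) and (3) follow at once from the single fact that no point of $X_b$ lies on three of the $240$ lines.
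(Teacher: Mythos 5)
Your proof is correct and follows essentially the same route as the paper, which states the corollary with no separate argument precisely because it is the translation of Proposition~\ref{prop:union} (transversality of pairs of lines and absence of concurrent triples) through the bi-anti-canonical double cover, using the correspondence between tangent plane sections and $\involB$-pairs established in Section~\ref{subsec:Bertini}. You have simply made explicit the details the paper leaves implicit (the isomorphism $l\cong H\cap Q$, the fact that $l\cap\involB(l)$ maps into $H\cap B_b$, and the degree counts), and these details are all accurate.
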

%
%
%
%
%
\section{Del Pezzo surfaces of degree one and \texorpdfstring{$\mtan{3}$}{mtan3}-sextics}\label{sec:mtan3anddelPezzo}
In this section,
we relate  $\mtan{3}$-sextics
with del Pezzo surfaces of degree $1$.
In Section~\ref{subsec:planecurveswithmtan},
we prove Propositions~\ref{prop:t3}
and exhibit the parameter space $\TTT$
of $\mtan{3}$-sextics in the frame $(A, \Lambda)$.
In Section~\ref{subsec:pip},
we describe a birational map $\pi_p$ from 
$Q$ to $\PP^2$,
which gives a proof of Propositions~\ref{prop:t3conics},
and induces a birational map between $\UUU$ and $\TTT$.
\subsection{Plane curves with \texorpdfstring{$\mtan{m}$}{tm}-singularity}\label{subsec:planecurveswithmtan}
We fix a point $A\in \PP^2$
and a line $\Lambda\subset \PP^2$ passing through $A$.
Let $(x, y)$ be affine coordinates 
of $\PP^2$ such that $A=(0,0)$ and $\Lambda=\{y=0\}$.
We consider a plane curve  $C\subset\PP^2$ of degree $d$
defined by 
\[
f(x, y)=\sum_{\mu+\nu\le d} a_{\mu\nu} x^{\mu} y^{\nu}=0.
\]
\begin{proposition}\label{prop:tm}
Suppose that $d\ge 2m$.
The plane curve $C=\{f=0\}$ has a $\mtan{m}$-singularity 
at $A$ with the tangent line $\Lambda$ if and only if the following holds:
\begin{enumerate}[label={\rm (\roman*)}]
\item $a_{\mu\nu}=0$ if $\mu+2\nu<2m$, and 
\item the following equation has distinct $m$ roots:
\[
a_{0, m}\, z^m+a_{2, m-1} \, z^{m-1} + \dots +a_{2m-2, 2}\, z+ a_{2m, 0}=0.
\]
\end{enumerate}
\end{proposition}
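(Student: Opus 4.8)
The plan is to set up local coordinates at $A$ and expand $f$ near the origin, then translate the definition of a $\mtan{m}$-singularity (Definition~\ref{def:mtan3}, suitably generalized to $m$ branches) into conditions on the coefficients $a_{\mu\nu}$. The key device is the weighted blow-up / Puiseux analysis adapted to the tangent direction $\Lambda = \{y=0\}$: since we expect $m$ smooth branches all tangent to $\Lambda$ with pairwise intersection number $2$, each branch should be a graph $y = c x^2 + (\text{higher order})$ for distinct constants $c$. Concretely, I would introduce the weighted order in which $x$ has weight $1$ and $y$ has weight $2$, so that a branch $y = c x^2 + \cdots$ is "balanced" and the relevant initial part of $f$ is its weighted-homogeneous leading form.

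First I would show necessity. Suppose $C$ has a $\mtan{m}$-singularity at $A$ with tangent line $\Lambda$. Then $C$ has multiplicity exactly $m$ at $A$, so all monomials $x^\mu y^\nu$ with $\mu+\nu < m$ vanish, and the degree-$m$ part of $f$ is $a_{m,0}x^m$ (the tangent cone is $m\Lambda$, i.e. $y=0$ with multiplicity $m$), forcing $a_{\mu,\nu}=0$ whenever $\mu+\nu=m$ and $\nu\ge 1$. More is true: because each of the $m$ branches is $y=c_ix^2+\cdots$, substituting a generic branch parametrization $x=t$, $y=\sum c_i t^2+\cdots$ and demanding that $f$ vanishes identically shows that in the weighted grading ($\mathrm{wt}\,x=1$, $\mathrm{wt}\,y=2$) all terms of weighted degree $<2m$ must vanish — this is condition (i), $a_{\mu\nu}=0$ for $\mu+2\nu<2m$. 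The weighted-degree-$2m$ part of $f$ is then
\[
F_{2m}(x,y)=\sum_{\mu+2\nu=2m} a_{\mu\nu}\,x^\mu y^\nu
= x^{2m}\cdot\bigl(a_{0,m}z^m+a_{2,m-1}z^{m-1}+\cdots+a_{2m,0}\bigr)\Big|_{z=y/x^2},
\]
and its vanishing on each branch $y=c_ix^2+\cdots$ forces $c_1,\dots,c_m$ to be roots of the polynomial in (ii); that there are $m$ distinct branches, each counted once, forces these $m$ roots to be distinct, giving (ii).

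For sufficiency I would run the argument in reverse, which is where the real work lies. Assuming (i) and (ii), the weighted-degree-$2m$ form $F_{2m}$ factors (after the substitution $z=y/x^2$) into $m$ distinct linear factors $y-c_ix^2$, $i=1,\dots,m$, with $c_i$ pairwise distinct. I would then invoke a Newton–Puiseux / weighted-Hensel argument: because the weighted leading form has $m$ distinct "slopes" $c_i$ in the $z$-variable, $f$ splits analytically near $A$ into $m$ factors, each defining a smooth branch $y=c_ix^2+\cdots$ with tangent line $\Lambda$; distinctness of the $c_i$ guarantees the branches are pairwise distinct and that branch $i$ and branch $j$ meet with intersection multiplicity exactly $2$ at $A$ (one computes $(y-c_ix^2+\cdots)\cdot(y-c_jx^2+\cdots) = x^2$-order since $c_i\neq c_j$). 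Hence $C$ has exactly a $\mtan{m}$-singularity at $A$ with tangent line $\Lambda$; the hypothesis $d\ge 2m$ only ensures there is "room" for these coefficients and that no lower-weight obstruction is forced by the degree bound. The main obstacle is the sufficiency direction: one must carry out the weighted factorization of $f$ cleanly — essentially a graded version of the Weierstrass preparation / Hensel lemma in the local ring, organized by the $(1,2)$-weighting — and verify that "distinct roots of the degree-$m$ polynomial in (ii)" is exactly the transversality condition that separates the $m$ branches and pins each pairwise intersection number to $2$. I would handle this either by a direct Puiseux expansion branch-by-branch or by noting that the $(1,2)$-weighted blow-up of $\PP^2$ at $A$ resolves the situation: the strict transform meets the exceptional curve in the points $z=c_i$, which are smooth points of the strict transform precisely when the $c_i$ are simple roots, yielding the $m$ disjoint smooth branches.
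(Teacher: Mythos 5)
Your argument is correct and is essentially the route the paper takes: the paper performs a single ordinary blow-up $(x,y)=(u,uv)$ at $A$ and observes that the $\mtan{m}$-condition is equivalent to the total transform containing the exceptional curve $E$ with multiplicity $m$ while the strict transform acquires an ordinary $m$-fold point at $(0,0)$ with all branches transverse to $E$, which translates directly into (i) and the distinct-roots condition (ii) for the same polynomial in $z=y/x^2$. Stopping at the ordinary $m$-fold point after one blow-up lets the paper sidestep the weighted-Hensel factorization you flag as the main remaining work, but the underlying computation is identical.
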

\begin{proof}
We consider the blowing-up 
\[
(u, v)\;\; \mapsto\;\; (x, y)=(u, uv)
\]
of $\PP^2$ at $A$.
The strict transform of $\Lambda$ is given by $v=0$,
and it intersects with the exceptional divisor $E=\{u=0\}$
at the point $(u, v)=(0,0)$.
Then $C$ has a $\mtan{m}$-singularity 
at $A$ with the tangent line $\Lambda$ if and only if 
the  total transform
\[
f(u, uv)=0
\]
of $C$ contains $E$ with multiplicity $m$
and the strict transform
\[
\sum_{\mu+\nu\le d} a_{\mu\nu} u^{\mu+\nu -m} v^{\nu}=0
\]
of $C$ has an ordinary $m$-fold point at $(u, v)=(0,0)$
with each local branch intersecting $E$ transversely.
This  condition is  equivalent to conditions (i) and (ii) 
in the statement.
\end{proof}
Proposition~\ref{prop:t3} follows from Proposition~\ref{prop:tm} immediately.
In the following, 
for a point $t\in \TTT$, we denote by $C_t\subset\PP^2$ the corresponding $\mtan{3}$-sextic.
\subsection{Birational map \texorpdfstring{$\pi_p$}{pip}}
\label{subsec:pip}
Recall that 
$Q\subset \PP^3$ is  a quadric cone 
with the vertex $V\in Q$.
Then $Q$ is ruled by lines passing through $V$.
We choose a smooth point $p\in Q\setminus\{V\}$.
Then the projection from $p$ induces a birational map
\[
\pi_p\colon Q\birat \PP^2.
\]
This birational map $\pi_p$ defines a frame $(A, \Lambda)$ in $\PP^2$ as follows.
\par
We describe $\pi_p$ in detail.
Let $r_p\subset Q$ be  the line in the ruling passing through $p$, 
and let  $\tilQ\to Q$ be  the composite of the minimal desingularization of $Q$
and the blowing-up at $p$.
Let $E_V$, $E_p$, and $\tilde{r}_p$ be the exceptional curve over $V$,
the exceptional curve over $p$, and the strict transform of $r_p$ in $\tilQ$.
Then $E_V$, $E_p$, $\tilde{r}_p$ are  smooth rational curves on $\tilQ$
with self-intersection number $-2$, $-1$, $-1$, respectively.
We blow down $\tilde{r}_p$ to a point, and then 
we blow down the image $E_V\sprime$ of $E_V$ to a point.
The resulting surface is 
the target  plane $\PP^2$ of $\pi_p$.
The  curves $E_V$ and  $\tilde{r}_p$  are mapped to a point $A\in \PP^2$,
and the curve $E_p$ is mapped to a line $\Lambda$ passing through $A$.
Every member of the ruling of $Q$ other than $r_p$ is mapped to a line  passing through $A$.
A plane section $H\cap Q$ not containing $V$ and $p$ is mapped to a smooth conic 
that is  passing through $A$ and is tangent to $\Lambda$ at $A$.
\par
The inverse of $\pi_p$ is given as follows.
Let $(\PP^2)^{\mathord{\sim}}\to \PP^2$ be the blowing up at $A$,
and let $\tilQ\to (\PP^2)^{\mathord{\sim}}$  be the blowing up at the intersection point $A\sprime$
of the exceptional curve $E_A$ over $A$ and the strict transform of $\Lambda$.
Then the strict transform $\widetilde{E}_A$ of $E_A$ (resp.~$\widetilde{\Lambda}$ of $\Lambda$) in $\tilQ$ 
is a smooth rational curve of self-intersection number $-2$ (resp.~$-1$).
We contract these two curves, and 
let $\tilQ\to Q$ denote  the contraction map.
Then $\widetilde{E}_A$  is contracted to the singular point $V$ of $Q$,  
and $\widetilde{\Lambda}$ is contracted to the  center $p$
of the projection.
The exceptional curve $E_{A\sprime}\subset \tilQ$ over $A\sprime$ is 
mapped to the line $r_p$ in the ruling.  
\begin{figure}
\begin{tikzpicture}[x=.4mm, y=.4mm]
\begin{scope}[scale=.9]
\draw[thick, rotate=-45] (0,0) ellipse [x radius=30, y radius=15];
\draw[thick] (-70, -70)--(-10.6, -10.6);
\draw[thick] (-70, -70)--(-22.8, 18.9);
\draw[thick] (-70, -70)--(18.9, -22.8);
\filldraw[black] (-20,-20) circle[radius=1.5];
\filldraw[black] (-69, -69) circle[radius=1.5];
\draw (-10.6, -9.5) node[right]{$r_p$};
\draw (-69.5, -65) node[above]{$V$};
\draw (-20,-22) node[below]{$p$};
\end{scope}
\begin{scope}[shift={(60, -5)}, scale=1.2]
\draw[->, very thick] (70,20)--(85, 5);
\filldraw[black] (90, -10) circle[radius=1.5];
\draw[thick] (60, -10)--(120, -10);
\draw (90, -10) node[below]{$A$};
\draw (120, -10) node[right]{$\Lambda$};
\end{scope}
\begin{scope}[shift={(-5,0)}, scale=.9]
\draw[<-, very thick] (20,25)--(35, 40);
\draw[thick] (50, 100)--(120, 100);
\draw[thick] (50, 60)--(75, 120);
\draw[thick] (120, 60)--(95, 120);
\draw(120, 100) node[right]{$\tilde{r}_p=E_{A\sprime}$};
\draw(50, 60) node[below]{$E_V=\widetilde{E}_A$};
\draw(120, 60) node[below]{$E_p=\widetilde{\Lambda}$};
\draw(120, 125) node[above]{\phantom{aaa}};
\end{scope}
\end{tikzpicture}
\caption{Birational map $\pi_p$}\label{fig:birat}
\end{figure}
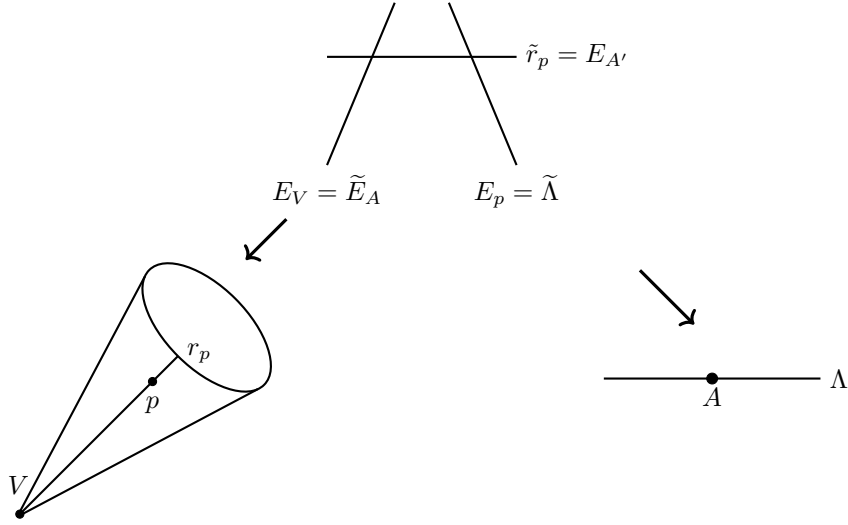
\subsection{Birational map \texorpdfstring{$\Pi_p$}{Pip}}
\label{subsec:Pip}
Recall that  $\UUU\subset |\OOO_{Q}(3)|$ is the parameter space of
the family $\XXX\to \UUU$ of bi-anti-canonical models of del Pezzo surfaces of degree $1$, and that 
 $\TTT$ is the parameter space of the family of $\mtan{3}$-sextics $C_t\subset \PP^2$ 
in the frame $(A, \Lambda)$.
Note that both of $\TTT$ and $\UUU$ are of dimension $15$.
\par
We have chosen a point $p\in Q\setminus\{V\}$.
Suppose that $u$ is a general point of $\UUU$.
In particular, the curve $B_u$ does not contain $p$ and the ruling line $r_p$ intersects $B_u$ 
at three distinct points.
Then the image of  $B_u$ by $\pi_p$ is a $\mtan{3}$-sextic $C_{t(u)}$ in the frame $(A, \Lambda)$,
where $t(u)\in \TTT$.
Conversely, if $t$ is a general point of $\TTT$,
then the image of the $\mtan{3}$-sextic $C_t$ by the inverse of $\pi_p$ is a member $B_{u(t)}$ of $\UUU$.
Therefore
the birational map
$\pi_p$ between $Q$ and $\PP^2$ induces a birational map 
\begin{equation}\label{eq:biratUUUTTT}
\Pi_p\colon \UUU\birat\TTT.
\end{equation}
%
%
\begin{proof}[Proof of Proposition~\ref{prop:t3conics}]
Recall that
$G(C_t)$ denotes the set of special tangent conics 
of  the $\mtan{3}$-sextic $C_t$ for $t\in \TTT$, and that
 $S(B_u)$ denotes the set of tangent plane sections for $B_u$ for $u\in \UUU$.
Suppose that $u\in \UUU$ is  general.
Then the  birational map
$\pi_p$ induces a bijection
\[
S(B_u)\;\cong\;G(C_{t(u)}).
\]
Moreover, 
the properties of the tangent plane sections for $B_u$ 
given in Corollary~\ref{cor:general1} carry over to properties
of the special tangent conics of $C_t$,
ensuring that they are in a general position.
\end{proof}
 \section{Embedding topology}\label{sec:embtop}
 In this section, we prove Theorem~\ref{thm:main2}.
 The following observation is due to Artal~Bartolo.
 \begin{lemma}\label{lem:AB}
 Any self-homeomorphism of $\PP^2$ preserves the orientation.
 \end{lemma}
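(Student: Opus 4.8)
The plan is to deduce this from the fact that $H_4(\PP^2;\ZZ)\cong\ZZ$ and that a self-homeomorphism acts on it by $\pm 1$, together with the observation that the action is forced to be $+1$ by compatibility with the intersection form and the cup product structure. First I would recall that $H^*(\PP^2;\ZZ)=\ZZ[\eta]/(\eta^3)$, where $\eta\in H^2(\PP^2;\ZZ)$ is the generator dual to the class of a line. A self-homeomorphism $\varphi$ induces a ring automorphism $\varphi^*$ of $H^*(\PP^2;\ZZ)$, hence $\varphi^*(\eta)=\pm\eta$, and therefore $\varphi^*(\eta^2)=(\pm\eta)^2=\eta^2$. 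Since $\eta^2$ generates $H^4(\PP^2;\ZZ)$ and the fundamental class in $H_4(\PP^2;\ZZ)$ is the Poincar\'e dual of the generator, $\varphi$ preserves the fundamental class; equivalently $\varphi$ has degree $+1$, which is exactly the statement that it preserves the orientation.

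The key steps, in order, are: (1) identify $H^*(\PP^2;\ZZ)$ as a truncated polynomial ring with generator $\eta$ in degree $2$; (2) observe that $\varphi^*$ is a graded ring automorphism, so it sends $\eta$ to $\pm\eta$ and hence acts as the identity on $H^4(\PP^2;\ZZ)=\ZZ\cdot\eta^2$; (3) translate this into the statement that $\varphi_*$ is the identity on $H_4(\PP^2;\ZZ)$, i.e.\ $\varphi$ is orientation-preserving. An alternative route, which I would mention as a remark, is to use the Lefschetz fixed point theorem: an orientation-reversing self-map of $\PP^2$ would act as $-1$ on $H^4$ and as $\pm 1$ on $H^2$, giving Lefschetz number $1-(\pm 1)-1$, which is $-1$ or $1$; the sign ambiguity on $H^2$ means this does not by itself force a contradiction, so the cohomology-ring argument is cleaner and that is the one I would write up.

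The only mild subtlety — and the closest thing to an obstacle — is making sure one uses \emph{integral} coefficients and the \emph{ring} structure rather than just the additive structure: additively a self-homeomorphism could a priori act by $-1$ on $H^4$, and it is precisely the multiplicativity $\varphi^*(\eta^2)=\varphi^*(\eta)^2$ that rules this out. One should also note that this argument is purely topological and uses nothing about the complex structure, as it must, since the conclusion will be applied to arbitrary homeomorphisms of pairs $(\PP^2,D)$. I do not expect any genuine difficulty here; the statement is classical, and the proof is a two-line consequence of the cohomology ring of the projective plane.
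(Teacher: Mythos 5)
Your proof is correct, and it is essentially the paper's argument in dual form: the paper works with the intersection form on $H_2(\PP^2)$ and its dependence on the orientation (the form is non-negative for the complex orientation, so it cannot change sign under $f_*$), whereas you work with the cup-product ring and the degree; in both cases the decisive point is that the self-pairing of the degree-$2$ generator is insensitive to the sign $\pm 1$ and detects the class in $H^4$ (equivalently the orientation).
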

  \begin{proof}
  Note that the intersection form on $H_2(\PP^2)$ 
  depends on the choice of an orientation.
  For an orientation $\xi$ of $\PP^2$, let $\intf{\phantom{i,i}}_{\xi}$ denote 
  the corresponding  intersection form on $H_2(\PP^2)$.
  Then we have $\intf{\phantom{i,i}}_{-\xi}=-\intf{\phantom{i,i}}_{\xi}$.
  Let $\eta$ be the orientation coming from the complex structure of $\PP^2$,
which satisfies $\intf{\gamma, \gamma}_{\eta}\ge 0$
  for any  $\gamma\in H_2(\PP^2)$.
Suppose that a self-homeomorphism $f$ of $\PP^2$ satisfies $f_*\eta = -\eta$.
Then we have 
  \[
  \intf{\gamma,\gamma}_{\eta}=  \intf{f_* \gamma,f_* \gamma}_{f_* \eta}=- \intf{f_* \gamma,f_* \gamma}_{\eta}, 
  \]
which is a contradiction.
  \end{proof}
  Now we prove our main result.
 \begin{proof}[Proof of Theorem~\ref{thm:main2}]
 Let $\UUU$ and $\TTT$ be as in Section~\ref{subsec:Pip}.
 We choose Zariski open subsets $\TTT^0\subset \TTT$ and $\UUU^0\subset \UUU$
such that $\TTT^0$ and $\UUU^0$ are isomorphic via the birational map
$\Pi_p\colon \UUU\birat \TTT$,  
and such that
the special tangent conics of $C_t$ are 
in general position for any $t\in \TTT_0$.
We fix a  point $o\in \TTT^0$, 
and let $b\in \UUU^0$ be the  point corresponding to $o$ via $\Pi_p$. 
Consider the family  $\GGG^0\to \TTT^0$
whose fiber over $t\in  \TTT^0$ is the set  $G(C_t)$ of special tangent conics. 
Then $\GGG^0\to \TTT^0$ is 
isomorphic to the pullback 
of the family $\SSS \to \UUU$
of the sets $S (B_u)$ of tangent plane sections 
via the morphism
\begin{equation}\label{eq:TTT0UUU}
\TTT^0\;\;\cong\;\; \UUU^0\;\;\inj\;\; \UUU.
\end{equation}
In particular,
the monodromy action 
of $\pione(\TTT^0, o)$ on $G(C_o)$ is induced 
by
the monodromy action 
of $\pione(\UUU, b)$ on $S (B_b)$
via the bijection $S (B_b)\cong G(C_o)$ given by $\pi_p$ 
and the surjective homomorphism
\[
\pione(\TTT^0, o)\surj \pione(\UUU, b)
\]
induced by~\eqref{eq:TTT0UUU}.
Recall from Proposition~\ref{prop:WE8} that,
under the identification of $S (B_b)$ with  $\barDelta(R(X_b))$ in~\eqref{eq:barLX}, 
 the monodromy action 
of $\pione(\UUU, b)$ on $S (B_b)$
factors through 
the surjective homomorphism~\eqref{eq:mon}
to the Weyl group $W(R(X_b))$. 
\par
We  consider 
the family $\GGG\sp{0\{k\}} \to \TTT^0$
whose fiber over $t$ is the set $G(C_t)\spset{k}$.
By the above discussion on the monodromy and the definition of $N(k)$, 
the space  $\GGG\sp{0\{k\}}$  has exactly $N(k)$ connected components.
If two points $s, s\sprime$
of $G(C_o)\spset{k}$ are in the same connected component 
of $\GGG\sp{0\{k\}}$,
then we can deform $D_{o,s}$ to   $D_{o, s\sprime}$ in $\PP^2$
without changing the embedding topology
along a path in $\GGG\sp{0\{k\}}$ connecting $s$ and $s\sprime$.
\par
To complete the proof of Theorem~\ref{thm:main2}, it is enough to show that, 
if $D_{o,s}$ and $D_{o, s\sprime}$ have the same embedding topology,
then $s$ and $s\sprime$ belong to the same connected component 
of $\GGG\sp{0\{k\}}$.
For a special tangent conic $\Gamma\in G(C_o)$ of $C_o$,
let $\delta_{\Gamma}\in \barDelta(R(X_b))$ 
denote the pair $\{[l_{\Gamma}]_R, -[l_{\Gamma}]_R\}\subset R(X_b)$,
where $\{l_{\Gamma}, \involB(l_{\Gamma})\}$ is the $\involB$-pair  obtained from 
the tangent plane section 
for $B_b$
corresponding to $\Gamma$ via $\pi_p$.
We then put
\[
\delta(s):=\set{\delta_{\Gamma}}{\Gamma\in s}\;\;\in\;\; \barDelta(R(X_b))\spset{k}.
\]
To show that $s$ and $s\sprime$ are in the same connected component of $\GGG\sp{0\{k\}}$,
it is enough to 
find an isometry $g\in W(R(X_b))$ of the lattice $R(X_b)$
such that
the self-bijection of $\barDelta(R(X_b))\spset{k}$ induced by $g$ 
maps $\delta(s)$ to $\delta(s\sprime)$.
\par
Suppose that  $D_{o,s}$ and $D_{o, s\sprime}$ have the same embedding topology.
We have a homeomorphism
\[
\Psi\;\;\colon\;\; (\PP^2, D_{o, s})\;\isom\; (\PP^2, D_{o, s\sprime}).
\]
Then $\Psi$ induces a self-homeomorphism $\Psi_M$ of the complement 
\[
M_o:=\PP^2-(C_o+\Lambda).
\]
Since $H_1(M_o)\cong \ZZ$,
there exists a unique double covering 
\[
\varphi_M\colon Z_o\to M_o
\]
of $M_o$
by a connected surface $Z_o$, and  $\Psi_M$ 
 lifts to a self-homeomorphism $\Psi_Z$ of $Z_o$.
 Note that the lift $\Psi_Z$ is unique up to the deck-transformation of $Z_o$ over $M_o$.
Since $\Psi_M$ is the restriction of a self-homeomorphism of $\PP^2$,
Lemma~\ref{lem:AB} implies that   $\Psi_M$
preserves the orientation of $M_o$, 
and hence  $\Psi_Z$
is an orientation-preserving homeomorphism of $Z_o$.
Consequently, the automorphism 
\[
g_Z(\Psi)\colon H_2(Z_o)\isom H_2(Z_o)
\]
of the $\ZZ$-module $H_2(Z_o)$
induced by $\Psi_Z$  
preserves the intersection form $\intf{\phantom{i,i}}_Z$
given  by the complex structure of $Z_o$.
We put
\[
\Ker\, \intf{\phantom{i,i}}_Z:=\set{x\in H_2(Z_o)}{\intf{x, y}_Z=0\;\textrm{for any}\; y\in H_2(Z_o)}.
\]
Then $g_Z(\Psi)$ gives rise to  
an isometry  
of the lattice 
\[
\barH_2(Z_o):=H_2(Z_o)/\Ker\, \intf{\phantom{i,i}}_Z.
\]
Note that $\pi_p$ induces an isomorphism from $M_o$ to 
\[
Q_b^0:=Q-(B_b+r_p).
\]
Let $2\tilB_b$ and $(r_p)\sp{\sim}$ be the pullback of $B_b$ and $r_p$ by the double covering
$X_b\to Q$. 
Then the double covering $Z_o$ of $M_o$ can be identified with 
\[
X_b^0:=X_b-(\tilB_b+(r_p)\sp{\sim}).
\]
Note that this identification 
 is unique up to the deck-transformation of $X_b^0$ over~$Q_b^0$.
In the following,  
we regard $Z_o$ as a Zariski open subset of $X_b$.
We also  identify $H_2(X_b)$ with $H^2(X_b)=\Pic(X_b)$ by the Poincar\'e duality.
Since the homology classes of $\tilB_b$ and  $(r_p)\sp{\sim}$  in $H_2(X_b)$ are $3\alpha_b$ and $\alpha_b$,
respectively, and both of $\tilB_b$ and  $(r_p)\sp{\sim}$ are irreducible, 
the Poincar\'e-Lefschetz duality implies  that the inclusion $Z_o\inj X_b$ yields a surjective homomorphism
\begin{equation}\label{eq:iotahom}
H_2(Z_o)\surj (\alpha_b)\sperp \subset H_2(X_b)
\end{equation}
that preserves the intersection form,
where $(\alpha_b)\sperp$ is the orthogonal complement of $\alpha_b$. 
By the identification $H_2(X_b)=\Pic(X_b)$,
we have $(\alpha_b)\sperp=R(X_b)$, and 
hence  the intersection form on $(\alpha_b)\sperp$ is non-degenerate.
Therefore 
the kernel of~\eqref{eq:iotahom} is equal to $\Ker \intf{\phantom{i,i}}_Z$.
In particular, the lattice $\barH_2(Z_o)$ is  isomorphic to the lattice $R(X_b)$.
Consequently,  the homeomorphism $\Psi$ defines an isometry
\[
g_X(\Psi)\;\in\; W(R(X_b)).
\]
Note that $g_X(\Psi)$ is uniquely determined by $\Psi$  up to $\pm \id$.
\par
We will show that 
the self-bijection of $\barDelta(R(X_b))\spset{k}$ induced by 
 $g_X(\Psi)$ 
sends $\delta(s)=\set{\delta_{\Gamma}}{\Gamma\in s}$ to $\delta(s\sprime)$.
Since $\Psi$ maps the elements of $s$ to the elements of $s\sprime$ bijectively,
it suffices to show that,
if $\Psi$ maps a special tangent conic $\Gamma$ to a special tangent conic $\Gamma\sprime$,
then $g_X(\Psi)$ maps $\delta_{\Gamma}$ to $\delta_{\Gamma\sprime}$.
\par
Suppose that $\Psi(\Gamma)=\Gamma\sprime$.
The curve $\varphi_M\inv(\Gamma\cap M_o)$
in $Z_o$ has two connected components $\Gamma^+$ and $\Gamma^-$.
The closure of $\Gamma^+$ in $X_b$ is a line $l_{\Gamma}$,
and the closure of $\Gamma^-$ is its $\involB$-partner.
(Recall that we have $Z_o\subset X_b$.)
These two components, viewed as locally finite topological cycles, 
give rise to
linear forms
\[
\gamma^+\colon H_2(Z_o)\to \ZZ,
\quad 
\gamma^-\colon H_2(Z_o)\to \ZZ, 
\]
by the intersection paring.
By definition, 
each of  these linear forms  factors through the quotient homomorphism 
\[
H_2(Z_o)\surj (\alpha_b)\sperp= R(X_b)
\] 
given by $Z_o\inj X_b$,
and the induced  linear form $(\alpha_b)\sperp\to \ZZ$  coincides with  the intersection pairing with $[l_{\Gamma}]_R\in R(X_b)$
and  with $[\involB(l_{\Gamma})]_R=-[l_{\Gamma}]_R$, respectively.
The connected components of 
the curve $\varphi_M\inv(\Gamma\sprime \cap M_o)$ are
$\Psi_Z(\Gamma^{+})$ and $\Psi_Z(\Gamma^{-})$.
The linear forms
on $H_2(Z_o)$ given by these locally finite topological cycles
are equal to
$\gamma^{+}\circ g_Z(\Psi)\inv$ and $\gamma^{-}\circ g_Z(\Psi)\inv $, respectively.
Each of these linear forms yields
a linear form on $(\alpha_b)\sperp= R(X_b)$,
which is the intersection pairing  with $ [l_{\Gamma\sprime}]_R$ and with $-[l_{\Gamma\sprime}]_R$,
respectively, where $l_{\Gamma\sprime}$ is the closure of $\Psi_Z(\Gamma^{+})$ in $X_b$.
Hence we obtain $g_X(\Psi)( [l_{\Gamma}]_R)= [l_{\Gamma\sprime}]_R$.
\end{proof}
\begin{remark}\label{rem:K3}
The double covering $W$ of $X_b$ branching along a general member of $|2\alpha_b|$
is a $K3$ surface.
In~\cite{Roulleau2022}, it is proved that the automorphism group of $W$ is isomorphic to $ \ZZ/2\ZZ\times\ZZ/2\ZZ$, 
that $W$ contains exactly  $240$ smooth rational curves, and that 
 $W$ has a structure of the double plane $W\to\PP^2$
whose branch curve is a smooth sextic possessing  $120$  conics that are $6$-tangent.
This $K3$ surface had been discovered in~\cite{Kondo1989}.
\end{remark}
\bibliographystyle{plain}

\end{document}